\newtheorem{theorem}{Theorem}[section]
\newtheorem{lemma}[theorem]{Lemma}
\newtheorem{proposition}[theorem]{Proposition}
\newtheorem{corollary}[theorem]{Corollary}   
\newtheorem{definition}[theorem]{Definition}
\newtheorem{example}[theorem]{Example}
\newtheorem{remark}[theorem]{Remark}
\newtheorem{conjecture}[theorem]{Conjecture}
\newtheorem{question}[theorem]{Question}
\numberwithin{equation}{section}
\begin{document}

\title{The $\mathrm{v}$-Number of Binomial Edge Ideals}
\author{
Siddhi Balu Ambhore, Kamalesh Saha, \and Indranath Sengupta
}
\date{}

\address{\small \rm  Discipline of Mathematics, IIT Gandhinagar, Palaj, Gandhinagar, 
Gujarat 382055, INDIA.}
\email{siddhi.ambhore@iitgn.ac.in}

\address{\small \rm  Discipline of Mathematics, IIT Gandhinagar, Palaj, Gandhinagar, 
Gujarat 382055, INDIA.}
\email{kamalesh.saha@iitgn.ac.in}

\address{\small \rm  Discipline of Mathematics, IIT Gandhinagar, Palaj, Gandhinagar, 
Gujarat 382055, INDIA.}
\email{indranathsg@iitgn.ac.in}
\thanks{The third author is the corresponding author}

\date{}

\subjclass[2020]{Primary 13F20, 13F65, 05E40, 05C25}

\keywords{$\mathrm{v}$-number, binomial edge ideals, Castelnuovo-Mumford regularity, initial ideals, completion set}

\allowdisplaybreaks

\begin{abstract}
The invariant $\mathrm{v}$-number was introduced very recently in the study of Reed-Muller-type codes. Jaramillo and Villarreal (J Combin. Theory Ser. A 177:105310, 2021) initiated the study of the $\mathrm{v}$-number of edge ideals. Inspired by their work, we take the initiation to study the $\mathrm{v}$-number of binomial edge ideals in this paper. We discuss some properties and bounds of the $\mathrm{v}$-number of binomial edge ideals. We explicitly find the $\mathrm{v}$-number of binomial edge ideals locally at the associated prime corresponding to the cutset $\emptyset$. We show that the $\mathrm{v}$-number of Knutson binomial edge ideals is less than or equal to the $\mathrm{v}$-number of their initial ideals. Also, we classify all binomial edge ideals whose $\mathrm{v}$-number is $1$. Moreover, we try to relate the $\mathrm{v}$-number with the Castelnuvo-Mumford regularity of binomial edge ideals and give a conjecture in this direction.
\end{abstract}

\maketitle

\section{Introduction}

Let $R=K[x_{1}\,\ldots, x_{n}]=\bigoplus_{d=0}^{\infty}R_{d}$ 
denote the polynomial ring in $n$ variables over a field $K$ with the standard grading. For a graded ideal $I$ of $R$, the set of \textit{associated prime} ideals of $I$, denoted by $\mathrm{Ass}(I)$ or $\mathrm{Ass}(R/I)$, is the collection of prime ideals of $R$ of the form $(I:f)$ for some $f\in R_{d}$. In 2020, Cooper et al. introduced a new invariant, called v-number, for graded ideals of $R$ during the study of Reed-Muller-type codes \cite{cstvv}.

\begin{definition}[{\cite[Definition 4.1]{cstvv}}]\label{v1.1}{\rm
Let $I$ be a proper graded ideal of $R$. The $\mathrm{v}$-\textit{number} of $I$, denoted 
by $\mathrm{v}(I)$, is defined by
$$ \mathrm{v}(I):=
 \mathrm{min}\{d\geq 0 \mid \exists\, f\in R_{d}\,\,\text{and}\,\, \mathfrak{p}\in \mathrm{Ass}(I)\,\, \text{with}\,\, (I:f)=\mathfrak{p} \}.$$
For each $\mathfrak{p}\in \mathrm{Ass}(I)$, we can locally define v-number as
$$\mathrm{v}_{\mathfrak{p}}(I):=\mathrm{min}\{d\geq 0 \mid \exists\, f\in R_{d}\,\, \text{with}\,\, (I:f)=\mathfrak{p} \}.$$
Then $\mathrm{v}(I)=\mathrm{min}\{\mathrm{v}_{\mathfrak{p}}(I)\mid \mathfrak{p}\in\mathrm{Ass}(I)\}$.
}
\end{definition}

This invariant of $I$ helps us understand the asymptotic behaviour of the minimum distance function $\delta_{I}$ of projective Reed-Muller-type codes (see \cite{cstvv}). So far, very little is known about the $\mathrm{v}$-number, and \cite{civan23}, \cite{grv21}, \cite{v}, \cite{v-mon} are the only papers written in this direction. The first paper entirely devoted to the $\mathrm{v}$-number was written by Jaramillo and Villarreal \cite{v}, where they studied the $\mathrm{v}$-number of edge ideals. Motivated by their work in \cite{v}, we take the initiation to study the $\mathrm{v}$-number of binomial edge ideals.

\begin{definition}{\rm
Let $G$ be a simple graph on the vertex set $V(G)=[n]=\{1,\ldots,n\}$ with the edge set $E(G)$. Consider the polynomial ring $S=K[x_{1},\ldots,x_{n},y_{1},\ldots,y_{n}]$ over a field $K$. Then the \textit{binomial edge ideal} of $G$, denoted by $J_{G}$, is an ideal of $S$ defined as
$$J_{G}=\big<f_{ij} = x_{i}y_{j}-x_{j}y_{i}\mid \{i,j\}\in E(G)\,\, \text{with}\,\, i<j\big>.$$
}
\end{definition}

The study of binomial edge ideals was started in 2010 independently through the articles \cite{hhhrkara} and \cite{ohtani}. Since then, this has become an intensive research topic in combinatorial commutative algebra. A binomial edge ideal has a natural determinantal structure in the sense that it can be seen as an ideal generated by a set of $2\times 2$-minors of a $2 \times n$ matrix $X$ of indeterminates. One of the primary motivations behind studying these ideals is their connection to algebraic statistics, particularly their appearance in the study of conditional independence statements \cite[Section 4]{hhhrkara}. Moreover, it is proved in \cite{cdg18} that binomial edge ideals belong to the class of \textit{Cartwright-Sturmfels} ideals, which was introduced in \cite{cdg20} inspired by the work of Cartwright and Sturmfels \cite{cs10} and has many nice properties.
\medskip

Generally, people study algebraic properties and invariants of binomial edge ideals by investigating the underlying graphs' combinatorics. So far, lots of studies have been done on binomial edge ideals in several directions (see the survey paper \cite{surveybinom} and references therein). In this paper, we give a new direction in the study of binomial edge ideals by investigating their $\mathrm{v}$-number. We discuss some properties of the $\mathrm{v}$-number of binomial edge ideals and their initial ideals. There are many papers (\cite{ez15}, \cite{ks16}, \cite{jnr19}, \cite{arvind21}, \cite{krs21}) on the upper bound of (\textit{Castelnuovo-Mumford}) regularity of binomial edge ideals, but the general lower bound of the regularity of binomial edge ideals is only given by Matsuda and Murai \cite{mm13}. We try to establish a new lower bound on the regularity of binomial edge ideals using the $\mathrm{v}$-number and give a conjecture on the relation between $\mathrm{v}$-number and regularity of binomial edge ideals. The paper is organized in the following manner: In Section \ref{preli}, we discuss the necessary prerequisites. In Section \ref{v-binomprop}, we study some properties of the $\mathrm{v}$-number of binomial edge ideals. For a vertex $v$ of a graph $G$, we denote by $G_{v}$, the following graph:
$$ V(G_{v})=V(G)\,\, \text{and}\,\, E(G_{v})=E(G)\cup \{\{i,j\}\mid i,j\in \mathcal{N}_{G}(v), i\neq j\}.$$

\noindent Corresponding to a vertex $v$ of a graph $G$, we have the following exact sequence (see \cite[Proof of Theorem 1.1]{ehh_cmbin}):
$$ 0\longrightarrow S/J_{G}\longrightarrow S/J_{G_{v}}\oplus S/\big<J_{G\setminus \{v\}},x_{v},y_{v}\big> \longrightarrow S/\big<J_{G_{v}\setminus\{v\}}, x_{v},y_{v}\big>\longrightarrow 0.$$

\noindent The graphs $G\setminus \{v\}, G_{v}, G_{v}\setminus\{v\}$ play an important role in the study of binomial edge ideals. These graphs and the above exact sequence help inductively to study the invariants (like regularity, depth, etc.) and properties (like unmixed, Cohen-Macaulay, etc.) of $J_{G}$. In Proposition \ref{vproperty}, we show how $\mathrm{v}(J_{G})$ is related to $\mathrm{v}(J_{G_{v}})$ and $\mathrm{v}(J_{G\setminus\{v\}})$. Then we define completion set of $G$ (Definition \ref{completionset}) with minimum and maximum completion number of $G$ (denoted by $\mathrm{min\mbox{-}comp}(G)$ and $\mathrm{max\mbox{-}comp}(G)$, respectively) to find some bounds on $\mathrm{v}(J_{G})$. We explicitly find $\mathrm{v}_{\emptyset}(G)$, the $\mathrm{v}$-number of $J_{G}$ locally at $P_{\emptyset}(G)$, and get a combinatorial upper bound of $\mathrm{v}(J_{G})$ in the following theorem:
\medskip

\noindent \textbf{Theorem \ref{vphi}.} \textit{Let $G$ be a simple graph. Then $\mathrm{v}_{\emptyset}(J_{G})=\mathrm{min\mbox{-}comp}(G)$. In particular, we have $\mathrm{v}(J_{G})\leq \mathrm{min\mbox{-}comp}(G)$.
}
\medskip

\noindent As a corollary of Theorem \ref{vphi}, we get $\gamma(G)\leq \mathrm{v}_{\emptyset}(G)$ in Corollary \ref{cordomin}, where $G$ is a connected non-complete graph and $\gamma(G)$ denotes the domination number of $G$. In Theorem \ref{v-add}, we prove the additivity of $\mathrm{v}$-number for some radical ideals, and as an application of Theorem \ref{v-add}, we get the additivity of $\mathrm{v}$-number of binomial edge ideals as follows:
\medskip

\noindent \textbf{Corollary \ref{v-addbinom}.}\textit{
Let $G=G_{1}\sqcup G_{2}$ be a graph. Then $\mathrm{v}(J_{G})=\mathrm{v}(J_{G_{1}})+\mathrm{v}(J_{G_{2}})$.
}
\medskip

Next, we try to establish a relation between the $\mathrm{v}$-number of binomial edge ideals and their initial ideals. We show that under some circumstances, $\mathrm{v}(J_{G})\leq \mathrm{v}(\mathrm{in}_{<}(J_{G}))$ in Theorem \ref{thmvin} and as an application we get in Corollary \ref{corweakly} that $\mathrm{v}(J_{G})\leq \mathrm{v}(\mathrm{in}_{<}(J_{G}))$ for Knutson binomial edge ideals. In Example \ref{exmvin}, we show that the $\mathrm{v}$-number of initial ideals of binomial edge ideals depends on the labelling of vertices. Finally, we end up this section by classifying all binomial edge ideals with $\mathrm{v}$-number $1$ as follows:
\medskip

\noindent \textbf{Theorem \ref{thmcone}.} \textit{Let $G$ be a simple connected graph. Then $\mathrm{v}(J_{G})=1$ if and only if $G=\mathrm{cone}(v,H)$ for some non-complete graph $H$.
}
\medskip

Section \ref{secvreg} of this paper is devoted to study the relation between regularity and $\mathrm{v}$-number of binomial edge ideals. In \cite[Corollary 2.7]{cv20}, Conca and Varbaro showed that for a graded ideal $I$ in a polynomial ring $R$ with a square-free initial ideal $\mathrm{in}_{<}(I)$ for some term order $<$, we have $\mathrm{reg}(R/I)=\mathrm{reg}(R/\mathrm{in}_{<}(I))$. Using this fact and looking at the initial ideal, we try to give a relation between the $\mathrm{v}$-number and regularity of binomial edge ideals. One of the main results of this section is the following:
\medskip

\noindent \textbf{Theorem \ref{vregchordal}.} \textit{Let $G$ be a chordal graph. Then $\mathrm{max\mbox{-}comp}(G)\leq \mathrm{reg}(S/J_{G})$. In particular, we have $\mathrm{v}(J_{G})\leq \mathrm{v}_{\emptyset}(J_{G})\leq \mathrm{max\mbox{-}comp}(G)\leq \mathrm{reg}(S/J_{G})$.
}
\medskip 

\noindent Also, we show that $\mathrm{v}_{\emptyset}(J_{G})\leq \mathrm{reg}(S/J_{G})$ in Theorem \ref{thmwhisker} for some classes of graphs, including whisker graphs (see Corollary \ref{corwhisker}). In Example \ref{exmwhisker}, we show that $\mathrm{v}_{\emptyset}(J_{G})$ could be a better lower bound for $\mathrm{reg}(S/J_{G})$ than the lower bound given by Matsuda and Murai in \cite{mm13}. Also, we show in Example \ref{v=reg} that this lower bound is tight by providing a graph $G$, which satisfy $\mathrm{v}(J_{G})=\mathrm{v}_{\emptyset}(J_{G})=\mathrm{reg}(S/J_{G})$. At the end, we show that for a given $n\in \mathbb{N}$, there exists a graph $G$ satisfying $\mathrm{reg}(S/J_G) - \mathrm{v}(J_G) = n$ (see Theorem \ref{reg-v}), i.e. the difference between $\mathrm{v}$-number and regularity of $J_{G}$ can be arbitrarily large. 
\medskip

In the last section (Section \ref{secprob}), we put some open problems to give a future direction on the study of $\mathrm{v}$-number of binomial edge ideals. Also, due to Theorem \ref{vregchordal}, Theorem \ref{thmwhisker}, and some evidence from our computations, we conjecture the following:
\medskip

\noindent \textbf{Conjecture \ref{conjvnum}.}\textit{
Let $G$ be a simple graph. Then $\mathrm{v}_{\emptyset}(J_{G})\leq \mathrm{reg}(S/J_{G})$. In particular, we have $\mathrm{v}(J_{G})\leq \mathrm{reg}(S/J_{G})$.
}
\medskip

The authors saw the recent preprint \cite{v-binom-js23} on arXiv, where there are some results identical to our results, however, the approaches are different and done independently.

\section{Preliminaries}\label{preli}

Let $R=K[x_{1},\ldots, x_{n}]$ be a polynomial ring over a field $K$, with the standard gradation. An Ideal $I$ of $R$ is said to be a \textit{monomial ideal} if $I$ is generated by a set of monomials and the unique minimal generating set of $I$ is denoted by $G(I)$. A monomial ideal $I$ is said to be \textit{square-free} if $G(I)$ consists of only square-free monomials. It is a well-known fact that square-free monomial ideals are radical ideals and their associated prime ideals are generated by a set of variables. Let $<$ be a monomial order on $R$. For a graded ideal $I\subseteq R$, we denote the \textit{initial ideal} of $I$ with respect to $<$ by $\mathrm{in}_{<}(I)$. For a monomial $m\in R$, the \textit{support} of $m$, denoted by $\mathrm{supp}(m)$, is defined as $\mathrm{supp}(m):=\{x_{i}\mid x_{i}\,\,\text{divides}\,\, m\}$. In this article, every ideal is assumed to be graded.
\medskip

In this paper, we assume all graphs are simple and whenever applicable connected also. For $T\subseteq V(G)$, we write $G\setminus T$ to denote the induced subgraph of $G$ on the vertex set $V(G)\setminus T$. Again by $G[T]$, we mean the induced subgraph of $G$ on the vertex set $T$. For a vertex $v\in V(G)$, we say $\mathcal{N}_{G}(v)=\{u\in V(G)\mid \{u,v\}\in E(G)\}$ the \textit{neighbour set} of $v$ in $G$. We write $\mathcal{N}_{G}[v]:=\mathcal{N}_{G}(v)\cup \{v\}$. A \textit{path} from $u$ to $v$ of length $n$ in $G$ is a sequence of vertices $u=v_{0},\ldots,v_{n}=v\in V(G)$, such that $\{v_{i-1},v_{i}\}\in E(G)$ for each $1\leq i\leq n$, and $v_{i}\neq v_{j}$ if $i\neq j$. 

\begin{definition}{\rm
A \textit{path} graph on $n$ vertices, denoted by $P_{n}$, is a graph whose vertex set can be ordered as $v_{1},\ldots, v_{n}$ such that $E(P_{n})=\{\{v_{i},v_{i+1}\}\mid 1\leq i\leq n-1\}$. The \textit{length} of $P_{n}$ is the number of edges in $P_{n}$, which is $n-1$. An \textit{induced path} of a graph $G$ is an induced subgraph of $G$, which is a path graph. We denote by $\ell(G)$ the maximum length of an induced path in $G$.
}
\end{definition}

\begin{definition}{\rm
A \textit{cycle} of length $n$, denoted by $C_{n}$, is a graph with $n$ vertices $v_{1},\ldots, v_{n}$ such that $E(G)=\{\{v_{i},v_{i+1}\}\mid 1\leq i\leq n-1\}\cup \{\{v_{1},v_{n}\}\}$. A graph is said to be \textit{chordal} if it has no induced cycle $C_{n}$ for $n\geq 4$.
}
\end{definition}

\begin{definition}{\rm
A graph is said to be \textit{complete} if there is an edge between every pair of vertices. We denote a complete graph on $n$ vertices by $K_{n}$. 
}
\end{definition}

\begin{remark}\label{remGv}{\rm
A vertex $v\in V(G)$ is said to be a \textit{free} vertex of $G$ if $\mathcal{N}_{G}(v)$ is a complete graph. It follows from \cite[Proposition 2.1]{raufrin}, 
that, $v$ is a free vertex of $G$ if and only if $v\not\in T$ for all $T\in\mathscr{C}(G)$. Also, from \cite[Proof of Theorem 1.1]{ehh_cmbin} it is observed that $T\in\mathscr{C}(G_{v})$ if and only if $v\not\in T$ and $T\in\mathscr{C}(G)$.
}
\end{remark}

\begin{definition}{\rm
Let $G$ be a graph with $V(G)=[n]$. A path $\pi: i=i_{0},i_{1},\ldots,i_{r}=j$ from $i$ to $j$ with $i<j$ in $G$ is said to be an \textit{admissible path} if the following hold:
\begin{enumerate}
\item $i_{k}\neq i_{l}$ for $k\neq l$;

\item For each $k\in\{1,\ldots,r-1\}$, we have either $i_{k}<i$ or $i_{k}>j$;

\item The induced subgraph of $G$ on the vertex set $\{i_{0},\ldots,i_{r}\}$ has no induced cycle.
\end{enumerate}
}
\end{definition}

\begin{remark}\label{remgrob}{\rm
Corresponding to an admissible path $\pi: i=i_{0},i_{1},\ldots,i_{r}=j$ from $i$ to $j$ with $i<j$ in $G$, we associate the monomial
$$ u_{\pi}=\bigg(\prod_{i_{k}>j} x_{i_{k}}\bigg)\bigg(\prod_{i_{l}<i} y_{i_{l}}\bigg).$$
Then $\mathcal{G}=\{u_{\pi}f_{ij}\mid \pi\,\, \text{is an admissible path from}\,\, i\,\, \text{to}\,\, j\,\, \text{with}\,\, i<j\}$ is a reduced Gr\"{o}bner basis of $J_{G}$ with respect to $<$ by \cite[Theorem 2.1]{hhhrkara}. Therefore, we have 
$$G(\mathrm{in}_{<}(J_{G}))=\{u_{\pi}x_{i}y_{j}\mid \pi\,\, \text{is an admissible path from}\,\, i\,\, \text{to}\,\, j\,\, \text{with}\,\, i<j\}.$$
}
\end{remark}

\subsection*{Primary decomposition of binomial edge ideals} A vertex $v\in V(G)$ is said to be a \textit{cut vertex} of $G$, 
if removal of $v$ from $G$ increases the number of connected components. Let $G$ be a graph on the vertex set $V(G)=[n]$. A set $T\subseteq [n]$ is said to be a \textit{cutset} of $G$ if each $t\in T$ is a cut vertex of $G\setminus (T\setminus\{t\})$. We denote by $\mathscr{C}(G)$ the set of all cutsets of $G$. For $T\subseteq [n]$, we denote the number of connected components of the graph $G\setminus T$ by $c_{G}(T)$ (or sometimes by $c(T)$ if the graph is clearly understood from the context). Let $G_{1},\ldots,G_{c(T)}$ be the connected components of $G\setminus T$. For each $G_{i}$, we denote by $\tilde{G_{i}}$, the 
complete graph on the vertex set $V(G_{i})$. We set
$$ P_{T}(G)=\left\langle \bigcup_{i\in T}\lbrace x_{i},y_{i}\rbrace, J_{\tilde{G_{1}}},\ldots,J_{\tilde{G}_{c(T)}}\right\rangle.$$
Then $P_{T}(G)$ is a prime ideal. By \cite[Corollary 2.2]{hhhrkara}, $J_{G}$ is a radical ideal and from \cite[Corollary 3.9]{hhhrkara}, the minimal primary decomposition of $J_{G}$ is 
$$J_{G}=\bigcap_{T\in\mathscr{C}(G)} P_{T}(G).$$

\noindent \textbf{Note:} Instead of writing $\mathrm{v}_{P_{T}(G)}(J_{G})$, the $\mathrm{v}$-number of $J_{G}$ locally at an associated prime $P_{T}(G)$, we will denote it by $\mathrm{v}_{T}(J_G)$.

\begin{remark}{\rm
For a prime ideal $\mathfrak{p}$ in $R$, we have $(\mathfrak{p}:1)=\mathfrak{p}$ and hence, $\mathrm{v}(\mathfrak{p})=0$. Note that for a graph $G$, $P_{\emptyset}(G)$ is a disjoint union of binomial edge ideals of complete graphs. Hence, we have $\mathrm{v}(J_{K_{n}})=0$ and if $G$ is a disjoint union of complete graphs, then also $\mathrm{v}(J_{G})=0$.
}
\end{remark}

\begin{remark}{\rm
Let $I$ be a radical ideal with $I=\mathfrak{p}_{1}\cap\cdots\cap \mathfrak{p}_{k}$ as a primary decomposition. Then we have by \cite[Exercise 1.12, Lemma 4.4]{am} that $(I:f)=\mathfrak{p}_{i}$ if and only if $f\not\in \mathfrak{p}_{i}$ and $f\in \mathfrak{p}_{j}$ for all $j\neq i$. We will use this fact frequently in our proofs.
}
\end{remark}
\medskip

\section{Properties of the $\mathrm{v}$-Number of Binomial Edge Ideals}\label{v-binomprop}

In this section, we study some properties of the $\mathrm{v}$-number of binomial edge ideals. We explicitly find $\mathrm{v}_{\emptyset}(J_{G})$ and give a combinatorial upper bound of $\mathrm{v}(J_{G})$. We try to establish the relation between $\mathrm{v}(J_{G})$ and $\mathrm{v}(\mathrm{in}_{<}(J_{G}))$. Finally, we classify all binomial edge ideals with $\mathrm{v}$-number $1$.

\begin{proposition}\label{complete}
Let $G$ be a simple graph on the vertex set $[n]$. Then for any vertex $i \in [n]$, we have $(J_G : x_i) = J_{G_i}$ and $(J_G:y_i)=J_{G_i}$.
\end{proposition}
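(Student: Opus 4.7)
The plan is to establish the two inclusions separately, and then note that the argument for $y_i$ is entirely symmetric.

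For the inclusion $J_{G_i} \subseteq (J_G : x_i)$, I would argue generator-wise. The generators of $J_{G_i}$ are the binomials $f_{ab}$ indexed by edges of $G_i$. If $\{a,b\} \in E(G)$, then $f_{ab} \in J_G$ and so $x_i f_{ab} \in J_G$ trivially. Otherwise $\{a,b\}$ is one of the newly added edges, meaning $a,b \in \mathcal{N}_G(i)$, so both $f_{ia}$ and $f_{ib}$ belong to $J_G$. A short manipulation gives the Pl\"ucker-type identity
\[
x_i \, f_{ab} \;=\; x_i(x_a y_b - x_b y_a) \;=\; x_a f_{ib} - x_b f_{ia},
\]
which places $x_i f_{ab}$ in $J_G$. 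Thus every generator of $J_{G_i}$ is a colon-element, giving the inclusion.

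For the reverse inclusion $(J_G : x_i) \subseteq J_{G_i}$, my plan is to use the primary decomposition recalled in the preliminaries. Since $J_G = \bigcap_{T \in \mathscr{C}(G)} P_T(G)$, we have
\[
(J_G : x_i) \;=\; \bigcap_{T \in \mathscr{C}(G)} \bigl(P_T(G) : x_i\bigr).
\]
Each $P_T(G)$ is prime, so $(P_T(G) : x_i) = R$ when $x_i \in P_T(G)$ (equivalently $i \in T$) and $(P_T(G) : x_i) = P_T(G)$ when $i \notin T$. Therefore the intersection reduces to
\[
(J_G : x_i) \;=\; \bigcap_{\substack{T \in \mathscr{C}(G) \\ i \notin T}} P_T(G).
\]

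To finish, I would invoke Remark \ref{remGv}, which identifies the indexing set: $T \in \mathscr{C}(G_i)$ if and only if $T \in \mathscr{C}(G)$ and $i \notin T$. It remains to check that $P_T(G) = P_T(G_i)$ for each such $T$. This is where one has to be slightly careful, but the observation is that when $i \notin T$, the vertex $i$ together with all of $\mathcal{N}_G(i) \setminus T$ lies in a single connected component of $G \setminus T$; hence the extra edges present in $G_i$ (which only connect vertices of $\mathcal{N}_G(i)$) neither split nor merge components. So $G \setminus T$ and $G_i \setminus T$ have the same partition of vertices into connected components, and since $P_T(\cdot)$ is defined using the binomial edge ideals of the complete graphs on those vertex sets, $P_T(G) = P_T(G_i)$. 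Combining,
\[
(J_G : x_i) \;=\; \bigcap_{T \in \mathscr{C}(G_i)} P_T(G_i) \;=\; J_{G_i}.
\]

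The main (minor) obstacle is the verification that the component structure is preserved when passing from $G$ to $G_i$ after removing a cutset $T$ not containing $i$; once this is in place, the primary-decomposition argument is essentially mechanical. The case of $(J_G : y_i)$ is proved by interchanging the roles of the $x$- and $y$-variables throughout, since both the identity $x_i f_{ab} = x_a f_{ib} - x_b f_{ia}$ and the primary decomposition are symmetric in $x$ and $y$.
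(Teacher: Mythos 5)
Your proposal is correct. The first inclusion $J_{G_i}\subseteq (J_G:x_i)$ is argued exactly as in the paper, via the identity $x_if_{pq}=x_pf_{iq}+x_qf_{pi}$ for $p,q\in\mathcal{N}_G(i)$. For the reverse inclusion the two arguments diverge in a small but genuine way. The paper takes $f$ with $fx_i\in J_G$, observes $J_G\subseteq J_{G_i}\subseteq P_T(G_i)$ for every $T\in\mathscr{C}(G_i)$, and uses primality of $P_T(G_i)$ together with $x_i\notin P_T(G_i)$ to conclude $f\in\bigcap_{T\in\mathscr{C}(G_i)}P_T(G_i)=J_{G_i}$; this never requires comparing $P_T(G)$ with $P_T(G_i)$. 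You instead distribute the colon over the primary decomposition of $J_G$, discard the components with $i\in T$ (where the colon is the whole ring), and then must verify the extra combinatorial fact that $P_T(G)=P_T(G_i)$ whenever $i\notin T$ --- which you do correctly, since the new edges of $G_i$ join vertices of $\mathcal{N}_G(i)$ that already lie in the component of $i$ in $G\setminus T$. The cost of your route is this additional verification; the payoff is that it yields the full minimal primary decomposition of $(J_G:x_i)$ in one stroke, so the generator-wise computation of the first inclusion is in fact redundant in your write-up. Both arguments are valid, and the symmetric treatment of $y_i$ is fine.
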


\begin{proof}
We first prove $J_{G_i} \subseteq (J_G : x_i)$. Note that $J_{G_i} = J_G + \big< f_{pq} \mid p,q \in N_G(i), p <q, \{p,q\} \notin E(G) \big>$. Since $J_{G}\subseteq (J_{G}:x_{i})$, it is enough to show that $f_{pq}\in (J_{G}:x_{i})$ for $p,q\in \mathcal{N}_{G}(i)$ with $p<q$ and $\{p,q\}\not\in E(G)$. Now we can write 
\begin{align*}
x_{i}f_{pq} &= x_i(x_py_q - x_qy_p)\\
& = x_ix_py_q -x_px_qy_i +x_px_qy_i - x_ix_qy_p \\
& = x_pf_{iq} + x_qf_{pi}.
\end{align*}
Since $ \{p,i\}, \{i,q\} \in E(G)$, we get $x_{i}f_{pq} \in J_G$. Hence,  $J_{G_i} \subseteq (J_G : x_i)$.\par 

Now, We will show $(J_G:x_i) \subseteq J_{G_i}$. Let $f \in (J_G:x_i)$. This implies $fx_i \in J_G$. We know that $J_G \subseteq J_{G_i} \subseteq P_T(G_i)$, for all $T \in \mathscr{C}(G_i)$. Thus, we get $fx_i \in P_T(G_i)$, for all $T \in \mathscr{C}(G_i)$. By Remark \ref{remGv}, $\mathscr{C}(G_i) = \{T \in \mathscr{C}(G) \mid i \notin T\}$. Thus, $x_i \notin P_T(G_i)$ for all $T \in \mathscr{C}(G_i)$. Therefore, we get $f \in P_T(G_i)$ for all $T \in \mathscr{C}(G_i)$, which implies $f \in \bigcap_{T \in \mathscr{C}(G_i )} P_T(G_i) = J_{G_i}$. Hence, $(J_G:x_i) \subseteq J_{G_i}$.\par 

Similarly, one can show that $(J_G : y_i) = J_{G_i}$. The proof follows same as the above, where for the first part of the proof, we get $f_{pq}y_i = y_q(f_{pi}) + y_p(f_{iq})$. 
\end{proof}

\begin{proposition}\label{Gij}
Let $G$ be a simple graph on vertex set $[n]$. Then for $i,j \in [n]$ we get $(G_i)_{j} = (G_{j})_{i}$.
\end{proposition}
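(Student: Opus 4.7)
Both graphs share the vertex set $[n]$ since $V(H_v) = V(H)$ for any graph $H$ and any vertex $v$, so the plan reduces to showing $E((G_i)_j) = E((G_j)_i)$. Two natural approaches present themselves.

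The slicker, algebraic approach uses Proposition~\ref{complete} twice: first on $G$ to get $(J_G : x_i) = J_{G_i}$, then on $G_i$ with $y_j$ to get $(J_{G_i} : y_j) = J_{(G_i)_j}$. Using $((I:a):b) = (I:ab)$, this gives $J_{(G_i)_j} = (J_G : x_i y_j)$, and symmetrically $J_{(G_j)_i} = (J_G : x_j y_i)$. Since $J_G$ is radical with minimal primary decomposition $J_G = \bigcap_{T \in \mathscr{C}(G)} P_T(G)$, one has $(J_G : f) = \bigcap\{P_T(G) : f \notin P_T(G)\}$. As each $P_T(G)$ is prime with linear part $\langle x_t, y_t : t \in T\rangle$, the condition $x_i y_j \in P_T(G)$ is equivalent to $i \in T$ or $j \in T$, the same condition as for $x_j y_i$. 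Hence $(J_G : x_i y_j) = (J_G : x_j y_i)$, so $J_{(G_i)_j} = J_{(G_j)_i}$; since the minimal generating set of a binomial edge ideal on a fixed vertex set determines the edge set, we conclude $(G_i)_j = (G_j)_i$.

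A purely combinatorial alternative proceeds by case analysis on whether $\{i,j\} \in E(G)$. If $\{i,j\} \notin E(G)$, then $j \notin \mathcal{N}_G(i)$, so $\mathcal{N}_{G_i}(j) = \mathcal{N}_G(j)$, and symmetrically $\mathcal{N}_{G_j}(i) = \mathcal{N}_G(i)$; both edge sets then equal $E(G) \cup \{\{p,q\} : p,q \in \mathcal{N}_G(i)\} \cup \{\{p,q\} : p,q \in \mathcal{N}_G(j)\}$. If $\{i,j\} \in E(G)$, then in $G_i$ the vertex $j$ becomes adjacent to every vertex of $\mathcal{N}_G(i) \setminus \{j\}$, giving $\mathcal{N}_{G_i}(j) = (\mathcal{N}_G(i) \cup \mathcal{N}_G(j)) \setminus \{j\}$; a short check then shows that $\mathcal{N}_G(i) \cup \mathcal{N}_G(j)$ becomes a clique in $(G_i)_j$, so $E((G_i)_j) = E(G) \cup \{\{p,q\} : p,q \in \mathcal{N}_G(i) \cup \mathcal{N}_G(j),\, p \neq q\}$, which is manifestly symmetric in $i,j$. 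The main obstacle in either approach is the case $\{i,j\} \in E(G)$, where the second operation acts on a neighborhood already enlarged by the first; the algebraic route avoids this bookkeeping by reducing everything to the observation that $x_i y_j$ and $x_j y_i$ lie in the same minimal primes of $J_G$.
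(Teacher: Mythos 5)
Your first, algebraic argument is essentially the paper's proof: the paper likewise reduces to showing $J_{(G_i)_j}=J_{(G_j)_i}$ and applies Proposition~\ref{complete} twice together with the identity $((I:a):b)=(I:ab)$. The one difference is that the paper colons by $x_j$ rather than $y_j$ in the second step, so that both sides become the \emph{same} ideal $(J_G:x_ix_j)=(J_G:x_jx_i)$ and nothing more needs to be said; your choice of $y_j$ forces the additional (correct, but avoidable) comparison of which minimal primes $P_T(G)$ contain $x_iy_j$ versus $x_jy_i$. Your combinatorial alternative is a genuinely different, self-contained verification that never leaves graph theory; it is also correct --- in the case $\{i,j\}\in E(G)$ one does get $\mathcal{N}_{G_i}(j)=(\mathcal{N}_G(i)\cup\mathcal{N}_G(j))\setminus\{j\}$ and hence $E((G_i)_j)=E(G)\cup\{\{p,q\}\mid p,q\in\mathcal{N}_G(i)\cup\mathcal{N}_G(j),\ p\neq q\}$, which is symmetric in $i$ and $j$ --- but it buys little here beyond independence from Proposition~\ref{complete}, which the paper has already established at this point.
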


\begin{proof}
It is enough to prove that $J_{(G_{i})_{j}} = J_{(G_{j})_{i}}$. By Proposition \ref{complete}, we can write $J_{(G_{i})_{j}} = (J_{G_{i}}:x_{j}) = ((J_G:x_{i}):x_{j}) = (J_G:x_{i}x_{j}) = (J_G:x_{j}x_{i}) = ((J_G:x_{j}):x_{i}) = (J_{G_{j}}:x_{i}) = J_{(G_{j})_{i}}$.
\end{proof}

\begin{proposition}\label{vproperty}
Let $G$ be a simple graph. Then the following hold:
\begin{enumerate}
\item[$\mathrm{(a)}$] For any $v\in V(G)$, we have $\mathrm{v}(J_{G})\leq \mathrm{v}(J_{G_{v}})+1$.
\item[$\mathrm{(b)}$] For a vertex $v$ of $G$, if there exists a cutset $T$ of $G$ such that $v\not\in T$ and $\mathrm{v}(J_{G})=\mathrm{v}_{T}(J_{G})$, then $\mathrm{v}(J_{G_{v}})\leq \mathrm{v}(J_{G})$.
\item[$\mathrm{(c)}$] For a vertex $v$ of $G$, if there exists a cutset $T$ of $G$ such that $v\in T$ and $\mathrm{v}(J_{G})=\mathrm{v}_{T}(J_{G})$, then $\mathrm{v}(J_{G\setminus \{v\}})\leq \mathrm{v}(J_{G})$.
\end{enumerate}
\end{proposition}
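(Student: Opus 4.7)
All three bounds rest on two recurring ingredients. First, Proposition \ref{complete} gives the colon formula $(J_G : x_v) = J_{G_v}$, which will let us pass between $J_G$ and $J_{G_v}$ by a single colon step. Second, since $J_G$ is radical with minimal primary decomposition $\bigcap_{T \in \mathscr{C}(G)} P_T(G)$, the remark preceding the proposition characterizes $(J_G : f) = P_T(G)$ by $f \notin P_T(G)$ and $f \in P_{T'}(G)$ for every $T' \in \mathscr{C}(G)$ with $T' \neq T$. A frequently used observation: if $v \notin T$, then $P_T(G) = P_T(G_v)$, because the connected components of $G \setminus T$ and $G_v \setminus T$ have identical vertex sets---the extra edges of $G_v$ lie among $\mathcal{N}_G(v)$, whose vertices are already linked through $v \in V(G) \setminus T$.

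For part (a), choose $g \in S_d$ with $d = \mathrm{v}(J_{G_v})$ and $(J_{G_v} : g) = P_T(G_v)$. By Remark \ref{remGv}, $T \in \mathscr{C}(G)$ with $v \notin T$, so $P_T(G_v) = P_T(G) \in \mathrm{Ass}(J_G)$. Setting $f := x_v g$, Proposition \ref{complete} gives
\[
(J_G : f) \;=\; ((J_G : x_v) : g) \;=\; (J_{G_v} : g) \;=\; P_T(G),
\]
and since $\deg f = d + 1$, we conclude $\mathrm{v}(J_G) \le d+1$.

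For part (b), pick $f \in S_d$ of degree $d = \mathrm{v}(J_G)$ with $(J_G : f) = P_T(G)$ and $v \notin T$. By Remark \ref{remGv}, $T \in \mathscr{C}(G_v)$ and $P_T(G_v) = P_T(G)$. Splitting the colon via the primary decomposition of $J_{G_v}$, we get
\[
(J_{G_v} : f) \;=\; \bigcap_{T' \in \mathscr{C}(G_v)} (P_{T'}(G_v) : f);
\]
for $T' \ne T$, $v \notin T'$ again forces $P_{T'}(G_v) = P_{T'}(G) \ni f$, so the contribution is $S$, whereas the $T' = T$ term contributes $P_T(G_v)$. Hence $(J_{G_v} : f) = P_T(G_v)$, giving $\mathrm{v}(J_{G_v}) \le d$.

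For part (c), the right tool is the surjection $\phi : S \twoheadrightarrow S/\langle x_v, y_v\rangle$, whose image we identify with $S' = K[x_i, y_i : i \ne v]$. A direct check shows $T \setminus \{v\} \in \mathscr{C}(G \setminus \{v\})$, and since the components of $(G \setminus \{v\}) \setminus (T \setminus \{v\}) = G \setminus T$ match those used in $P_T(G)$, one has $\phi(P_T(G)) = P_{T \setminus \{v\}}(G \setminus \{v\})$ and $\phi(J_G) = J_{G \setminus \{v\}}$. Choosing $f \in S_d$ with $(J_G : f) = P_T(G)$, the fact that $\langle x_v, y_v \rangle \subseteq P_T(G)$ and $f \notin P_T(G)$ forces $f \notin \langle x_v, y_v \rangle$, so $\bar f := \phi(f)$ is a nonzero homogeneous polynomial of degree $d$ lying outside $P_{T \setminus \{v\}}(G \setminus \{v\})$. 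The crux is then to show $\bar f \in P_{T'}(G \setminus \{v\})$ for every other $T' \in \mathscr{C}(G \setminus \{v\})$: lift to $Q_{T'} := \phi^{-1}(P_{T'}(G \setminus \{v\}))$, a prime of $S$ that contains $J_G$, and pick any minimal prime $P_{T^*}(G) \subseteq Q_{T'}$ of $J_G$. The case $T^* = T$ would yield $P_{T \setminus \{v\}}(G \setminus \{v\}) \subseteq P_{T'}(G \setminus \{v\})$ after applying $\phi$, forcing $T' = T \setminus \{v\}$, a contradiction; so $T^* \ne T$, and then $f \in P_{T^*}(G) \subseteq Q_{T'}$ gives $\bar f \in P_{T'}(G \setminus \{v\})$, as needed. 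The main obstacle is precisely this last book-keeping step---cutsets of $G \setminus \{v\}$ do not correspond transparently to cutsets of $G$ containing $v$---and the lift-and-descend through $\phi$ is what sidesteps it.
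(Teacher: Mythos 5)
Your argument is correct. Parts (a) and (b) are essentially identical to the paper's proof: the paper also multiplies a witness for $J_{G_v}$ by $x_v$ and uses $(J_G:x_v)=J_{G_v}$ for (a), and for (b) checks membership of the same $f$ in the primes $P_{T'}(G_v)=P_{T'}(G)$, $T'\in\mathscr{C}(G_v)$. Part (c) is where you genuinely diverge. The paper asserts that, because $x_v,y_v\in P_T(G)$, a minimal-degree witness $f$ with $(J_G:f)=P_T(G)$ can be \emph{chosen} inside the subring $K[x_i,y_i\mid i\neq v]$, and then shows $(J_{G\setminus\{v\}}:f)=P_{T\setminus\{v\}}(G\setminus\{v\})$ directly by testing both inclusions (implicitly using $J_G\cap K[x_i,y_i\mid i\neq v]=J_{G\setminus\{v\}}$). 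You instead push an arbitrary witness forward along $\phi:S\twoheadrightarrow S/\langle x_v,y_v\rangle$, using $\phi(J_G)=J_{G\setminus\{v\}}$ and a lift-and-descend argument on the primes $Q_{T'}=\phi^{-1}(P_{T'}(G\setminus\{v\}))$. This buys you something real: you never have to justify that a degree-$\mathrm{v}(J_G)$ witness exists in the subring (the paper's one-line justification for that choice is the least transparent step of its proof, since for cutsets $T'$ not containing $v$ the prime $P_{T'}(G)$ contains binomials mixing $x_v,y_v$ with the other variables, so truncating $f$ does not obviously preserve membership in those primes). The price is the extra bookkeeping with $Q_{T'}$ and the observation that containment of minimal primes of $J_{G\setminus\{v\}}$ forces $T'=T\setminus\{v\}$, which you handle correctly.
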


\begin{proof}
$\mathrm{(a)}$: We know $\mathscr{C}(G_{v})=\{T\in \mathscr{C}(G)\mid v\not\in T\}$ by Remark \ref{remGv}. Let $f$ be a homogeneous polynomial and $T\in \mathscr{C}(G_{v})$ such that $(J_{G_{v}}:f)=P_{T}(G_{v})$ and $\mathrm{deg}(f)=\mathrm{v}(J_{G_{v}})$. Note that $T\in \mathscr{C}(G)$ and $P_{T}(G)=P_{T}(G_{v})$ as $v\not\in T$. Then $(J_{G_{v}}:f)=P_{T}(G_{v})$ implies $(J_{G}:x_{v}f)=P_{T}(G)$ by Proposition \ref{complete}. Hence, $\mathrm{v}(J_{G})\leq \mathrm{v}(J_{G_{v}})+1$.\medskip

$\mathrm{(b)}$: Let $f$ be a homogeneous polynomial such that $(J_{G}: f)=P_{T}(G)$ and $\mathrm{v}(J_{G})=\mathrm{deg}(f)$. Since $v\not \in T$, we have $T\in \mathscr{C}(G_{v})$ by Remark \ref{remGv}. Now $(J_{G}: f)=P_{T}(G)$ implies $f\in P_{T'}(G)$ for all $T'\in\mathscr{C}(G)$ with $T'\neq T$ and $f\not\in P_{T}(G)$. Note that for every $T'\in \mathscr{C}(G_{v})$, we have $P_{T'}(G_{v})=P_{T'}(G)$. Therefore, $f\in P_{T'}(G_{v})$ for all $T'\in \mathscr{C}(G_{v})$ with $T'\neq T$ and $f\not\in P_{T}(G_{v})$. Hence, $(J_{G_{v}}:f)=P_{T}(G_{v})$ and so, $\mathrm{v}(J_{G_{v}})\leq \mathrm{v}(J_{G})$.
\medskip

$\mathrm{(c)}$: $T$ is a cutset of $G$ implies every $t\in T$ is a cut vertex of $G\setminus (T\setminus\{t\})$. Then every $t\in T\setminus\{v\}$ is a cut vertex of $G\setminus (T\setminus\{t\})=(G\setminus\{v\})\setminus ((T\setminus\{v\})\setminus\{t\})$, which gives $T\setminus\{v\}\in \mathscr{C}(G\setminus\{v\})$. Since $v\in T$, we have $P_{T}(G)=\big<x_{v},y_{v}\big>+P_{T\setminus\{v\}}(G\setminus\{v\})$. Since $x_{v},y_{v}\in P_{T}(G)$, we can choose homogeneous polynomial $f\in K[\{x_{i},y_{i}\mid i\in V(G\setminus\{v\})\}]$ such that $(J_{G}:f)=P_{T}(G)$ and $\mathrm{v}(J_{G})=\mathrm{deg}(f)$. Now $fP_{T}(G)\subseteq J_{G}$ implies $fP_{T\setminus\{v\}}(G\setminus\{v\})\subseteq J_{G\setminus\{v\}}$ as $f\in K[\{x_{i},y_{i}\mid i\in V(G\setminus\{v\})\}]$. Thus, $P_{T\setminus\{v\}}(G\setminus\{v\})\subseteq (J_{G\setminus\{v\}}:f)$. Let $g\in (J_{G\setminus\{v\}}:f)$. Then $fg\in J_{G\setminus\{v\}}\subseteq P_{T\setminus\{v\}}(G\setminus\{v\})$. Since $(J_{G}:f)=P_{T}(G)$, we have $f\not\in P_{T}(G)$ and so, $f\not\in P_{T\setminus\{v\}}(G\setminus\{v\})$. Thus, $g\in P_{T\setminus\{v\}}(G\setminus\{v\})$. Therefore, we get $(J_{G\setminus\{v\}}:f)=P_{T\setminus\{v\}}(G\setminus\{v\})$. Hence, $\mathrm{v}(J_{G\setminus \{v\}})\leq \mathrm{v}(J_{G})$.
\end{proof}

\begin{definition}[{Completion set}]\label{completionset}{\rm
Let $G$ be a simple graph. For a set $V=\{v_{1},\ldots,v_{k}\}\subseteq V(G)$, we write $G_{V}=G_{v_{1} v_{2}\cdots v_{k}}:=(\ldots ((G_{v_{1}})_{v_{2}})\ldots)_{v_{k}}$. Due to Proposition \ref{Gij}, $G_{V}$ does not depend on the ordering of the elements of $V$, and thus, the definition of $G_{V}$ is well-defined. A set $W\subseteq V(G)$ is said to be a \textit{completion set} of $G$ if $G_{W}$ is a disjoint union of complete graphs. A completion set $W$ is said to be a \textit{minimal completion set} of $G$ if $G_{U}$ is not a disjoint union of complete graphs for every $U\subsetneq W$. The minimum (respectively, maximum) cardinality among all the minimal completion sets of $G$ is denoted by $\mathrm{min\mbox{-}comp}(G)$ (respectively, $\mathrm{max\mbox{-}comp}(G)$) and we call it the \textit{minimum completion} (respectively, \textit{maximum completion}) number of $G$.
}
\end{definition}

\begin{lemma}\label{lemint}
Let $G$ be a graph. Let $T_{1},\ldots,T_{k}\in \mathscr{C}(G)\setminus\{\emptyset\}$ be some collection of cutsets of $G$. Write $I_{j}=\big<x_{i},y_{i}\mid i\in T_{j}\big>$ for each $1\leq j\leq k$. Then
$$(I_{1}+P_{\emptyset}(G))\cap\cdots\cap (I_{k}+P_{\emptyset}(G))=(I_{1}\cap\cdots\cap I_{k})+P_{\emptyset}(G).$$
\end{lemma}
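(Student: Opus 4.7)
The containment $(I_{1}\cap\cdots\cap I_{k})+P_{\emptyset}(G)\subseteq \bigcap_{j}(I_{j}+P_{\emptyset}(G))$ is automatic, so the plan is to prove the reverse inclusion by equipping $S$ with the fine $\mathbb{Z}^{n}$-multigrading $\deg(x_{i})=\deg(y_{i})=e_{i}$ and comparing the two sides piece by piece.

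First I will check that every ideal in the statement is homogeneous for this multigrading. Each $I_{j}$ is a monomial ideal, hence automatically multigraded. Each generator $f_{ab}=x_{a}y_{b}-x_{b}y_{a}$ of $P_{\emptyset}(G)$ has both of its terms in multidegree $e_{a}+e_{b}$, so $P_{\emptyset}(G)$ is multigraded as well. Consequently both $\bigcap_{j}(I_{j}+P_{\emptyset}(G))$ and $(\bigcap_{j}I_{j})+P_{\emptyset}(G)$ are multigraded, and it suffices to match them in each multidegree $\alpha\in\mathbb{Z}_{\geq 0}^{n}$.

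Next I will compute the $\alpha$-graded piece $(I_{j})_{\alpha}$. The crucial point is that because both variables $x_{i},y_{i}$ at a vertex $i\in T_{j}$ appear among the generators of $I_{j}$, a monomial of multidegree $\alpha$ lies in $I_{j}$ iff $\alpha_{i}\geq 1$ for some $i\in T_{j}$. Since this condition depends only on $\alpha$, $(I_{j})_{\alpha}$ is either all of $S_{\alpha}$ (when $\mathrm{supp}(\alpha):=\{i:\alpha_{i}>0\}$ meets $T_{j}$) or $0$ (otherwise). Thus $(\bigcap_{j}I_{j})_{\alpha}=S_{\alpha}$ exactly when $\mathrm{supp}(\alpha)$ is a transversal of $\{T_{1},\ldots,T_{k}\}$, and equals $0$ otherwise.

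Finally I will assemble the pieces. Since $P_{\emptyset}(G)_{\alpha}\subseteq S_{\alpha}$, the $\alpha$-piece of $I_{j}+P_{\emptyset}(G)$ equals $S_{\alpha}$ when $\mathrm{supp}(\alpha)\cap T_{j}\neq\emptyset$ and equals $P_{\emptyset}(G)_{\alpha}$ otherwise; intersecting over $j$ gives that $[\bigcap_{j}(I_{j}+P_{\emptyset}(G))]_{\alpha}$ is $S_{\alpha}$ if $\mathrm{supp}(\alpha)$ meets every $T_{j}$ and is $P_{\emptyset}(G)_{\alpha}$ otherwise. The identical case analysis applied to $(\bigcap_{j}I_{j})+P_{\emptyset}(G)$ yields the same answer in each multidegree, so the two ideals coincide. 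I do not anticipate a real obstacle; the one point to watch is insisting on the fine multidegree $e_{i}$ rather than a coarser grading, so that the paired generators $\{x_{i},y_{i}\}$ and the binomials $f_{ab}$ are simultaneously homogeneous.
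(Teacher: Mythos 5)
Your proof is correct, but it takes a genuinely different route from the paper's. The paper argues by induction on $k$: it uses the modular law (valid because $P_{\emptyset}(G)\subseteq I_{k}+P_{\emptyset}(G)$) to rewrite the intersection, and then invokes the reduced Gr\"{o}bner basis $\{x_{i},y_{i}\mid i\in T_{k}\}\cup\{f_{pq}\}$ of $I_{k}+P_{\emptyset}(G)$ to split an element $g\in(I_{1}\cap\cdots\cap I_{k-1})\cap(I_{k}+P_{\emptyset}(G))$ as $g=g'+h'$ with $g'\in I_{k}$ and $h'\in P_{\emptyset}(G)$, and to transfer membership in the monomial ideal $I_{1}\cap\cdots\cap I_{k-1}$ to the individual pieces. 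You instead observe that all the ideals involved are homogeneous for the fine $\mathbb{Z}^{n}$-grading $\deg(x_{i})=\deg(y_{i})=e_{i}$ --- the key points being that $x_{i}$ and $y_{i}$ share a degree so each $I_{j}$ has graded pieces equal to either $S_{\alpha}$ or $0$ depending only on whether $\mathrm{supp}(\alpha)$ meets $T_{j}$, and that the binomials $f_{ab}$ are homogeneous of degree $e_{a}+e_{b}$ --- and then you match the two sides degree by degree, where both evaluate to $S_{\alpha}$ or to $P_{\emptyset}(G)_{\alpha}$ according to the same condition. Your argument avoids the induction and the Gr\"{o}bner basis entirely, handles all $k$ ideals symmetrically in one pass, and sidesteps the somewhat delicate step in the paper where monomial-ideal membership must be propagated through the reduction $g=g'+h'$; what the paper's approach buys is that it stays within the elementary ideal-theoretic toolkit already set up for the rest of the section rather than introducing the multigrading. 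Both are complete proofs.
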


\begin{proof}
It is enough to consider $G$ is connected. Note that $(I_{1}\cap\cdots\cap I_{k})+P_{\emptyset}(G)\subseteq (I_{1}+P_{\emptyset}(G))\cap\cdots\cap (I_{k}+P_{\emptyset}(G))$ is clear. We use induction on $k$ to prove the reverse inclusion. If $k=1$, then there is nothing to prove. Suppose $(I_{1}+P_{\emptyset}(G))\cap\cdots\cap (I_{k-1}+P_{\emptyset}(G))=(I_{1}\cap\cdots\cap I_{k-1})+P_{\emptyset}(G)$ holds. Let $f\in (I_{1}+P_{\emptyset}(G))\cap\cdots\cap (I_{k}+P_{\emptyset}(G))=((I_{1}\cap\cdots\cap I_{k-1})+P_{\emptyset}(G))\cap (I_{k}+P_{\emptyset}(G))$. Since $P_{\emptyset}(G)\subseteq I_{k}+P_{\emptyset}(G)$, we have
$$((I_{1}\cap\cdots\cap I_{k-1})+P_{\emptyset}(G))\cap (I_{k}+P_{\emptyset}(G))=(I_{1}\cap\cdots\cap I_{k-1})\cap (I_{k}+P_{\emptyset}(G))+P_{\emptyset}(G).$$
Then we can write $f=g+h$, where $g\in (I_{1}\cap\cdots\cap I_{k-1})\cap (I_{k}+P_{\emptyset}(G))$ and $h\in P_{\emptyset}(G)$. Note that $\{x_{i},y_{i}\mid i\in T_{k}\}\cup \{f_{pq}\mid p<q\,\,\text{and}\,\, p,q\in V(G)\setminus T_{k}\}$ is a reduced Gr\"{o}bner basis of $I_{k}+P_{\emptyset}(G)$. Therefore, we can write $g=g'+h'$ 
in the reduced form, such that $g'\in I_{k}$ and $h'\in P_{\emptyset}(G)$. Now $g\in I_{1}\cap\cdots\cap I_{k-1}$ implies each monomial term of $g'$ and $h'$ belongs to $I_{1}\cap\cdots\cap I_{k-1}$ as $I_{1}\cap\cdots\cap I_{k-1}$ is a monomial ideal. Thus, $g'\in I_{1}\cap \cdots\cap I_{k}$ and hence, $f=g'+h'+h\in (I_{1}\cap\cdots\cap I_{k})+P_{\emptyset}(G)$.
\end{proof}

\begin{theorem}\label{vphi}
Let $G$ be a simple graph. Then $\mathrm{v}_{\emptyset}(J_{G})=\mathrm{min\mbox{-}comp}(G)$. In particular, we have $\mathrm{v}(J_{G})\leq \mathrm{min\mbox{-}comp}(G)$.
\end{theorem}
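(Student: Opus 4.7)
The plan is to prove $\mathrm{v}_\emptyset(J_G)=\mathrm{min\mbox{-}comp}(G)$ by matching inequalities; the \emph{in particular} clause then follows from $\mathrm{v}(J_G)\le\mathrm{v}_\emptyset(J_G)$, which holds by definition.

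For the upper bound, I would take any minimum completion set $W=\{v_1,\ldots,v_d\}$ of $G$ and apply Proposition \ref{complete} iteratively (appealing to Proposition \ref{Gij} for order-independence) to obtain $(J_G:x_{v_1}\cdots x_{v_d})=J_{G_W}$. Since $W$ is a completion set, $G_W$ is a disjoint union of cliques on the connected components of $G$, hence $J_{G_W}=P_\emptyset(G)$, so $\mathrm{v}_\emptyset(J_G)\le d=\mathrm{min\mbox{-}comp}(G)$.

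For the reverse inequality, I would exploit the $\mathbb{Z}^n$-multigrading on $R$ with $\deg(x_i)=\deg(y_i)=e_i$, under which $J_G$ and each $P_T(G)$ are multi-homogeneous. Suppose $(J_G:f)=P_\emptyset(G)$, equivalently $f\in\bigcap_{T\in\mathscr{C}(G)\setminus\{\emptyset\}}P_T(G)$ and $f\notin P_\emptyset(G)$. Replacing $f$ by a suitable multi-homogeneous component retains both properties and the total degree. So assume $f$ has multi-degree $\mathbf{d}=(d_1,\ldots,d_n)$ and set $W:=\{i:d_i>0\}$; then $|W|\le\sum_i d_i=\deg f$, and it suffices to show $W$ is a completion set. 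A preliminary observation I would establish is the equivalence: $W$ is a completion set of $G$ if and only if $W$ intersects every nonempty cutset of $G$. This follows by iterating Remark \ref{remGv} to obtain $\mathscr{C}(G_W)=\{T\in\mathscr{C}(G):T\cap W=\emptyset\}$, together with the standard fact that a connected graph whose only cutset is $\emptyset$ has every vertex free and is therefore complete (a short induction on path length, using that $\mathcal{N}(u_1)$ being a clique forces $u_0,u_2$ adjacent whenever $u_0,u_1,u_2$ is a path).

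To force the contradiction, suppose $T\in\mathscr{C}(G)\setminus\{\emptyset\}$ with $T\cap W=\emptyset$. Since $d_i=0$ for $i\in T$, no variable indexed in $T$ occurs in $f$, so reducing the containment $f\in P_T(G)=\langle x_i,y_i:i\in T\rangle+J_{\widetilde{G\setminus T}}$ modulo $\langle x_i,y_i:i\in T\rangle$ leaves $f\in J_{\widetilde{G\setminus T}}$. Every connected component of $G\setminus T$ lies inside a connected component of $G$, so each generator $x_iy_j-x_jy_i$ of $J_{\widetilde{G\setminus T}}$ is already a generator of $J_{\widetilde G}=P_\emptyset(G)$, whence $f\in P_\emptyset(G)$, contradicting the hypothesis. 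Therefore $W$ is a completion set, and $\mathrm{min\mbox{-}comp}(G)\le |W|\le \deg f$. The main obstacle I anticipate is the reduction to a multi-homogeneous representative together with the clean combinatorial characterization of completion sets as hitting sets of the nonempty cutsets; once those are in hand, the cutset chase is a short ideal argument driven by Proposition \ref{complete} and the primary decomposition.
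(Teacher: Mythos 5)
Your proof is correct. The upper bound $\mathrm{v}_{\emptyset}(J_{G})\leq \mathrm{min\mbox{-}comp}(G)$ is exactly the paper's argument (iterate Proposition \ref{complete} over a minimum completion set, using Proposition \ref{Gij} for order-independence), but your lower bound takes a genuinely different route. The paper proves Lemma \ref{lemint}, a Gr\"{o}bner-basis argument showing that intersection distributes over the sum with $P_{\emptyset}(G)$; it then writes $f=g+h$ with $g$ a nonzero element of the square-free monomial ideal $\bigcap_{T\in\mathscr{C}(G)\setminus\{\emptyset\}}\langle x_{i},y_{i}\mid i\in T\rangle$ and $h\in P_{\emptyset}(G)$, replaces $f$ by a minimal monomial generator of that ideal, and reads off a completion set from its support via Proposition \ref{complete}. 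You instead pass to a multi-homogeneous component of $f$ for the $\mathbb{Z}^{n}$-grading $\deg(x_{i})=\deg(y_{i})=e_{i}$ (legitimate, since every $P_{T}(G)$ is multigraded and standard degree is preserved), and show that the index support $W$ must meet every nonempty cutset: otherwise killing the variables indexed by a cutset $T$ disjoint from $W$ fixes $f$ and forces it into the binomial edge ideal of the completed components of $G\setminus T$, hence into $P_{\emptyset}(G)$, a contradiction. Combined with your characterization of completion sets as transversals of the nonempty cutsets (obtained by iterating Remark \ref{remGv} together with the free-vertex fact), this yields $\mathrm{min\mbox{-}comp}(G)\leq\lvert W\rvert\leq\deg f$. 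Your route avoids Lemma \ref{lemint} entirely and makes explicit the hitting-set description of completion sets, which is only implicit in the paper; the paper's route, in exchange, produces an explicit square-free monomial $m$ of minimum degree with $(J_{G}:m)=P_{\emptyset}(G)$. Both arguments tacitly use that every completion set contains a minimal one (completion sets are upward closed), which is harmless.
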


\begin{proof}
Let $\mathrm{min\mbox{-}comp}(G)=k$ and $\{v_{1},\ldots,v_{k}\}$ be a minimal completion set of $G$. Then $J_{G}:x_{v_{1}}\cdots x_{v_{k}}=P_{\emptyset}(G)$ due to Proposition \ref{complete}. Therefore, $\mathrm{v}_{\emptyset}(J_{G})\leq \mathrm{min\mbox{-}comp}(G)$. For the reverse inequality, let $f$ be a homogeneous polynomial such that $(J_{G}:f)=P_{\emptyset}(G)$ and $\mathrm{v}_{\emptyset}(J_{G})=\mathrm{deg}(f)$. Then $f\in \bigcap_{T\in \mathscr{C}(G)\setminus \{\emptyset\}}P_{T}(G)$ and $f\not\in P_{\emptyset}(G)$. For every $T\in \mathscr{C}(G)$, we can write $P_{T}(G)=\big<x_{i},y_{i}\mid i\in T\big>+I_{T}$, where $I_{T}$ is a binomial edge ideal of disjoint union of some complete graphs such that $I_{T}\subseteq P_{\emptyset}(G)$. Since $f\not\in P_{\emptyset}(G)$, using Lemma \ref{lemint}, we can write $f=g+h$ such that $(0\neq) g\in I$ and $h\in P_{\emptyset}(G)$, where $I$ is the square-free monomial ideal given by
$$I=\bigcap_{T\in \mathscr{C}(G)\setminus\{\emptyset\}} \big<x_{i},y_{i}\mid i\in T\big>.$$
Thus, $\mathrm{deg}(f)\geq \mbox{min}\{\mathrm{deg}(m)\mid m\in G(I)\}$. For every $m\in G(I)$, we have 
$$m\in  \bigcap_{T\in \mathscr{C}(G)\setminus \{\emptyset\}}P_{T}(G)\,\,\text{and}\,\, m\not\in P_{\emptyset}(G).$$
Thus, $(J_{G}:m)=P_{\emptyset}(G)$ for every $m\in G(I)$. Therefore, we can choose $f$ such that $f\in G(I)$. Suppose $f=x_{i_{1}}\cdots x_{i_{r}} y_{j_{1}}\cdots y_{j_{s}}$. Then by Proposition \ref{complete}, we get $\{i_{1},\ldots,i_{r},j_{1},\ldots,j_{s}\}$ is a completion set of $G$. Thus, $\mathrm{deg}(f)=\mathrm{v}_{\emptyset}(J_{G})\geq \mathrm{min\mbox{-}comp}(G)$. Hence, $\mathrm{v}_{\emptyset}(J_{G})=\mathrm{min\mbox{-}comp}(G)$ and $\mathrm{v}(J_{G})\leq \mathrm{v}_{\emptyset}(J_{G})=\mathrm{min\mbox{-}comp}(G)$.
\end{proof}

\begin{remark}{\rm
Let $G$ be a graph with connected components $G_{1},\ldots, G_{k}$. It is easy to see from Definition \ref{completionset} that $\mathrm{min\mbox{-}comp}(G)=\mathrm{min\mbox{-}comp}(G_{1})+\cdots+\mathrm{min\mbox{-}comp}(G_{k})$. Then by Theorem \ref{vphi}, we get $\mathrm{v}_{\emptyset}(J_{G})=\mathrm{v}_{\emptyset}(J_{G_{1}})+\cdots + \mathrm{v}_{\emptyset}(J_{G_{k}})$.
}
\end{remark}

\begin{definition}{\rm
A \textit{dominating set} of a graph $G$ is a set $D\subseteq V(G)$ such that every vertex not in $D$ has a neighbour in $D$. The \textit{domination number} of $G$, denoted by $\gamma( G)$, is the cardinality of a dominating set with minimum vertices.
}
\end{definition}

\begin{corollary}\label{cordomin}
Let $G$ be a connected non-complete graph. Then $\gamma(G)\leq \mathrm{v}_{\emptyset}(G)$.
\end{corollary}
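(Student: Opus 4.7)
The plan is to invoke Theorem \ref{vphi}, which identifies $\mathrm{v}_{\emptyset}(J_{G})$ with $\mathrm{min\mbox{-}comp}(G)$, and then prove the purely combinatorial statement $\gamma(G)\leq \mathrm{min\mbox{-}comp}(G)$. In fact, I will establish the stronger claim that every completion set of a connected non-complete graph $G$ is itself a dominating set; the corollary then follows by choosing a minimum completion set.

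Fix a completion set $W=\{v_{1},\ldots,v_{k}\}$ of $G$ and set $G_{0}:=G$ and $G_{i}:=(G_{i-1})_{v_{i}}$, so that $G_{W}=G_{k}$. Since $G$ is connected and non-complete, $W\neq\emptyset$; and since $E(G)\subseteq E(G_{W})$, the graph $G_{W}$ is a connected disjoint union of complete graphs, hence the complete graph $K_{|V(G)|}$. Now suppose for contradiction that $W$ is not a dominating set, and pick $u\in V(G)\setminus W$ with $\mathcal{N}_{G}(u)\cap W=\emptyset$.

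The key technical step, which I expect to be the main obstacle, is to show by induction on $i$ that $\mathcal{N}_{G_{i}}(u)=\mathcal{N}_{G}(u)$. A new edge incident to $u$ appears in $G_{i}$ only if $u\in\mathcal{N}_{G_{i-1}}(v_{i})$; but the inductive hypothesis gives $\mathcal{N}_{G_{i-1}}(u)=\mathcal{N}_{G}(u)\subseteq V(G)\setminus W$, so $v_{i}\notin\mathcal{N}_{G_{i-1}}(u)$, and hence no such edge is added. Consequently $\mathcal{N}_{G_{W}}(u)\subseteq V(G)\setminus W$, which contradicts $G_{W}=K_{|V(G)|}$ since $W\neq\emptyset$. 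Therefore $W$ dominates $G$, giving $\gamma(G)\leq |W|$. Applying this with $W$ a minimum completion set and combining with Theorem \ref{vphi} yields $\gamma(G)\leq\mathrm{min\mbox{-}comp}(G)=\mathrm{v}_{\emptyset}(J_{G})$.
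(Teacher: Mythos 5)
Your proposal is correct and follows essentially the same route as the paper: reduce via Theorem \ref{vphi} to showing that a (minimal) completion set must dominate $G$, and derive a contradiction from a vertex $u$ outside the set with no neighbour in it. The only difference is that you spell out, by induction on the completion steps, why $u$'s neighbourhood never grows — a point the paper's proof asserts in one line — and you observe the claim holds for arbitrary, not just minimal, completion sets; both refinements are harmless.
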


\begin{proof}
By Theorem \ref{vphi}, we have $\mathrm{v}_{\emptyset}(G)=\mathrm{min\mbox{-}comp}(G)$. Thus, it is enough to show that any minimal completion set of $G$ is a dominating set of $G$. Let $V=\{v_{1},\ldots,v_{k}\}$ be a minimal completion set of $G$. Note that $V$ is non-empty as $G$ is non-complete. Suppose there is a vertex $u\in V(G)\setminus V$ such that $u\not\in \mathcal{N}_{G}(v_{i})$ for each $1\leq i\leq k$. Then $u\not\in \mathcal{N}_{G_{V}}(v_{k})$, which gives a contradiction as $G_{V}$ is a complete graph. Thus, $V$ is a dominating set of $G$ and hence, $\gamma(G)\leq \mathrm{v}_{\emptyset}(G)$.
\end{proof}

\begin{lemma}\label{lemadd}
Let $I_{1}\subseteq R_{1}=K[x_{1},\ldots,x_{n}]$ and $I_{2}\subseteq R_{2}=K[y_{1},\ldots,y_{m}]$ be two radical ideals. Suppose $P_{1}\in \mathrm{Ass}(I_{1})$ and $I_{1}+I_{2}:=I_{1}R+I_{2}R$, where $R=R_{1}\otimes_{K} R_{2}$, is a radical ideal with $\mathrm{Ass}(I_{1}+I_{2})=\{Q_{1}+Q_{2}\mid Q_{1}\in \mathrm{Ass}(I_{1}), Q_{2}\in\mathrm{Ass}(I_{2})\}$. Then $((I_{1}+I_{2}):P_{1})=(I_{1}:P_{1})+I_{2}$.
\end{lemma}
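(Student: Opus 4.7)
The plan is to pass through primary decompositions on both sides and reduce the identity to a distributivity statement that is a consequence of flat base change.

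\textbf{Step 1.} Since $I_{1}$ is radical, the primes in $\mathrm{Ass}(I_{1})$ are minimal, so no proper containments exist among them. Writing $I_{1} = \bigcap_{Q\in\mathrm{Ass}(I_{1})} Q$ and using that $P_{1} \not\subseteq Q$ for $Q\neq P_{1}$ (so the primality of $Q$ forces any $f$ with $fP_{1}\subseteq Q$ to lie in $Q$), one checks
$$(I_{1} : P_{1}) \;=\; \bigcap_{Q\in\mathrm{Ass}(I_{1}),\, Q\neq P_{1}} Q.$$

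\textbf{Step 2.} By hypothesis $I_{1} + I_{2} = \bigcap_{(Q_{1}, Q_{2})}(Q_{1} + Q_{2})$, where each $Q_{1} + Q_{2}$ is prime (being an associated prime of a radical ideal). Thus
$$((I_{1}+I_{2}):P_{1}) \;=\; \bigcap_{(Q_{1}, Q_{2})} ((Q_{1}+Q_{2}):P_{1}).$$
Using the isomorphism $R/(Q_{1}+Q_{2}) \cong (R_{1}/Q_{1}) \otimes_{K} (R_{2}/Q_{2})$, which contains $R_{1}/Q_{1}$ as a subring via $r\mapsto r\otimes 1$, I would verify $(Q_{1}+Q_{2})\cap R_{1} = Q_{1}$; consequently $P_{1} \subseteq Q_{1} + Q_{2}$ if and only if $P_{1} = Q_{1}$. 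So each factor equals $R$ when $Q_{1} = P_{1}$ and $Q_{1} + Q_{2}$ otherwise, giving
$$((I_{1}+I_{2}):P_{1}) \;=\; \bigcap_{Q_{1} \neq P_{1},\, Q_{2}}(Q_{1} + Q_{2}).$$

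\textbf{Step 3.} Collapse this double intersection by two applications of the distributive identity $\bigcap_{i} (A + B_{i}) = A + \bigcap_{i} B_{i}$, valid when $A$ is extended from one tensor factor and the $B_{i}$ from the other. First, fix $Q_{1}$ and let the $B_{i}$ range over $\mathrm{Ass}(I_{2})$: modding out by $Q_{1}$ identifies $R/Q_{1}$ with $(R_{1}/Q_{1})\otimes_{K} R_{2}$, and since $R_{1}/Q_{1}$ is a $K$-vector space (hence flat), the functor $(R_{1}/Q_{1}) \otimes_{K} -$ preserves finite intersections, yielding $\bigcap_{Q_{2}}(Q_{1} + Q_{2}) = Q_{1} + I_{2}$. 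Repeating the same reasoning with the roles of $R_{1}$ and $R_{2}$ exchanged (taking $A = I_{2}$, the $B_{i}$ running over $\mathrm{Ass}(I_{1})\setminus\{P_{1}\}$, and using flatness of $R_{2}/I_{2}$ over $K$) yields $\bigcap_{Q_{1}\neq P_{1}}(Q_{1} + I_{2}) = (I_{1}:P_{1}) + I_{2}$. Chaining these with Step 2 completes the proof.

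The main obstacle is the distributivity in Step 3: for arbitrary ideals $A, B, C$ in a polynomial ring, $(A+B)\cap(A+C) = A+(B\cap C)$ fails in general. The crux is that $Q_{1}$ and $Q_{2}$ live in disjoint sets of variables, so $R = R_{1}\otimes_{K} R_{2}$ cleanly separates them; this structural input converts the desired distributivity into the (trivially true) exchange of finite intersection with flat base change over a field.
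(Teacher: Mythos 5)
Your proof is correct and takes essentially the same route as the paper's: both reduce $((I_{1}+I_{2}):P_{1})$ to the intersection $\bigcap_{Q_{1}\neq P_{1},\,Q_{2}}(Q_{1}+Q_{2})$, using that $P_{1}\not\subseteq Q_{1}+Q_{2}$ unless $Q_{1}=P_{1}$, and then collapse it to $(I_{1}:P_{1})+I_{2}$ by distributing sums over intersections for ideals living in disjoint sets of variables. Your Step 3 supplies, via flatness over $K$, an explicit justification of that distributivity which the paper invokes without comment, so your write-up is if anything slightly more complete.
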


\begin{proof}
We write $J=I_{1}+I_{2}$. Let $f\in (I_{1}:P_{1})+I_{2}$. Then $f=g+h$ for some $g\in (I_{1}:P_{1})$ and $h\in I_{2}$. Since $gP_{1}\subseteq I_{1}$ and $h\in I_{2}$, we have $fP_{1}=gP_{1}+hP_{1}\subseteq I_{1}+I_{2}=J$. Therefore, $f\in (J:P_{1})$ and $(I_{1}:P_{1})+I_{2}\subseteq (J:P_{1})$. Let $f^{\prime}\in (J:P_{1})$. Then $f^{\prime}P_{1}\subseteq J$. Since $G(P_{1})\subseteq R_{1}$ and $I_{1}$ is radical, we have $P_{1}\not\subseteq Q_{1}+ Q_{2}$ for every $Q_{1}\in \mathrm{Ass}(I_{1})\setminus\{P_{1}\}$ and $Q_{2}\in\mathrm{Ass}(I_{2})$. Therefore, $f^{\prime}\in I_{1}^{\prime}+ I_{2}$, where $I_{1}'=\bigcap_{Q_{1}\in \mathrm{Ass}(I_{1})\setminus\{P_{1}\}} Q_{1}$. Then we can write $f^{\prime}=g^{\prime}+h^{\prime}$ such that $g^{\prime}\in I_{1}^{\prime}$ and $h^{\prime}\in I_{2}$. If $g^{\prime}\in P_{1}$, then $g\in I_{1}$ and hence, $f^{\prime}\in J\subseteq ((I_{1}:P_{1})+I_{2})$. If $g^{\prime}\not \in P_{1}$, then $I_{1}:g^{\prime}=P_{1}$ as $g^{\prime}\in I_{}^{\prime}$. In this case, we get $f^{\prime}=g^{\prime}+h^{\prime}\in (I_{1}:P_{1})+I_{2}$, and hence, $(J:P_{1})=(I_{1}:P_{1})+I_{2}$.
\end{proof}

\begin{theorem}[The v-number is additive]\label{v-add}
Let $I_{1}\subseteq R_{1}=K[x_{1},\ldots,x_{n}]$ and $I_{2}\subseteq R_{2}=K[y_{1},\ldots,y_{m}]$ be two radical ideals. Suppose $I_{1}+I_{2}:=I_{1}R+I_{2}R$, where $R=R_{1}\otimes_{K} R_{2}$, is a radical ideal with $\mathrm{Ass}(I_{1}+I_{2})=\{P_{1}+P_{2}\mid P_{1}\in \mathrm{Ass}(I_{1}), P_{2}\in\mathrm{Ass}(I_{2})\}$. Then 
$$ \mathrm{v}(I_{1}+I_{2})=\mathrm{v}(I_{1})+\mathrm{v}(I_{2}).$$
\end{theorem}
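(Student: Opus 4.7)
The plan is to prove the two inequalities $\mathrm{v}(I_1 + I_2) \leq \mathrm{v}(I_1) + \mathrm{v}(I_2)$ and $\mathrm{v}(I_1 + I_2) \geq \mathrm{v}(I_1) + \mathrm{v}(I_2)$ separately.

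For the upper bound, I would pick homogeneous $f_i \in R_i$ of degree $\mathrm{v}(I_i)$ together with associated primes $P_i \in \mathrm{Ass}(I_i)$ satisfying $(I_i : f_i) = P_i$ (for $i = 1, 2$), and verify that $f := f_1 f_2$ realises $((I_1 + I_2) : f) = P_1 + P_2$. The containment $P_1 + P_2 \subseteq ((I_1 + I_2) : f)$ is immediate since $P_1 f = (P_1 f_1) f_2 \subseteq I_1 R \subseteq I_1 + I_2$ and, symmetrically, $P_2 f \subseteq I_2 R \subseteq I_1 + I_2$. For the reverse containment, assuming $g f \in I_1 + I_2 \subseteq P_1 + P_2$, primality of the associated prime $P_1 + P_2$ reduces the task to checking $f \notin P_1 + P_2$; this follows from the natural isomorphism $R/(P_1 + P_2) \cong (R_1/P_1) \otimes_K (R_2/P_2)$ under which the image of $f$ is the simple tensor $\bar f_1 \otimes \bar f_2$ of two nonzero elements, which is nonzero in the tensor product of $K$-vector spaces.

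For the lower bound I would prove the sharper local statement $\mathrm{v}_{P_1 + P_2}(I_1 + I_2) \geq \mathrm{v}_{P_1}(I_1) + \mathrm{v}_{P_2}(I_2)$ for every pair $(P_1, P_2) \in \mathrm{Ass}(I_1) \times \mathrm{Ass}(I_2)$, whence taking minima gives the desired global bound. Fix a homogeneous $f$ of degree $d := \mathrm{v}_{P_1 + P_2}(I_1 + I_2)$ with $((I_1 + I_2) : f) = P_1 + P_2$. Since $I_1 + I_2$ and every $Q_1 + Q_2 \in \mathrm{Ass}(I_1 + I_2)$ are bihomogeneous in the natural $(x, y)$-bigrading on $R$, decomposing $f = \sum_{(i, j)} f_{(i, j)}$ into bihomogeneous pieces and matching bidegrees in the membership $f \in Q_1 + Q_2$ (for every other associated prime) and the non-membership $f \notin P_1 + P_2$ shows that at least one $f_{(i, j)}$ still satisfies the colon condition $P_1 + P_2$; I may therefore replace $f$ by such a bihomogeneous component of some bidegree $(i, j)$ with $i + j = d$. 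Pushing to $A := R/(I_1 + I_2) \cong A_1 \otimes_K A_2$ with $A_l := R_l/I_l$, write $\bar f = \sum_k \bar g_k \otimes \bar h_k$ where $\bar g_k \in (A_1)_i$ and $\bar h_k \in (A_2)_j$, arranging the decomposition (by reading off coefficients against a $K$-basis of $(A_2)_j$) so that the $\bar h_k$ are $K$-linearly independent.

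Extending $\{\bar h_k\}$ to a $K$-basis of $(A_2)_j$ identifies $A_1 \otimes \mathrm{span}_K\{\bar h_k\}$ with a direct sum of copies of $A_1$ inside $A$, and the relation $\bar P_1 \cdot \bar f = 0$ then splits componentwise into $\bar P_1 \bar g_k = 0$, i.e.\ $P_1 \subseteq (I_1 : g_k)$, for every $k$. Passing further to $A/\bar{\mathfrak{p}} \cong (A_1/\bar P_1) \otimes_K (A_2/\bar P_2)$ with $\bar{\mathfrak{p}} := (P_1 + P_2)/(I_1 + I_2)$, the assumption $f \notin P_1 + P_2$ forces the image $\sum \tilde g_k \otimes \tilde h_k$ to be nonzero there, hence some $g_{k_0} \notin P_1$. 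Since $I_1$ is radical, one has $(I_1 : g_{k_0}) = \bigcap_{Q \in \mathrm{Ass}(I_1),\, g_{k_0} \notin Q} Q \subseteq P_1$, which combined with $P_1 \subseteq (I_1 : g_{k_0})$ gives $(I_1 : g_{k_0}) = P_1$, so $\mathrm{v}_{P_1}(I_1) \leq \mathrm{deg}(g_{k_0}) = i$. The completely symmetric argument, obtained by swapping the roles of $A_1$ and $A_2$ and using a decomposition $\bar f = \sum_l \bar g_l' \otimes \bar h_l'$ with the $\bar g_l'$'s $K$-linearly independent, gives $\mathrm{v}_{P_2}(I_2) \leq j$; summing yields $\mathrm{v}_{P_1}(I_1) + \mathrm{v}_{P_2}(I_2) \leq i + j = d$, as required. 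The main obstacle is the tensor-product bookkeeping in this last stretch---arranging linearly independent factors so that $\bar P_1 \bar f = 0$ splits componentwise, and then using the hypothesised shape of $\mathrm{Ass}(I_1 + I_2)$ to descend from non-vanishing in $A/\bar{\mathfrak{p}}$ to non-vanishing of an individual $g_{k_0}$ modulo $P_1$.
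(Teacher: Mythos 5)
Your proposal is correct, and the upper bound is handled essentially as in the paper (take $f_1f_2$; the paper checks $f_1f_2\notin P_1+P_2$ via the radical characterization of colons, you via $R/(P_1+P_2)\cong (R_1/P_1)\otimes_K(R_2/P_2)$ --- same substance). The lower bound, however, follows a genuinely different route. The paper first proves the auxiliary identity $((I_1+I_2):P_1)=(I_1:P_1)+I_2$ (their Lemma \ref{lemadd}), then computes the colon module $\big((I_1+I_2):(P_1+P_2)\big)/(I_1+I_2)$ as the product of the modules $(I_1:P_1)/I_1$ and $(I_2:P_2)/I_2$, quoting intersection and product lemmas from Hochster--Jeffries and H\`a--Nguyen--Trung--Trung, and finally invokes the theorem of Grisalde--Reyes--Villarreal that $\mathrm{v}_{\mathfrak p}(J)$ is the least degree of a generator $g$ of $(J:\mathfrak p)/J$ with $(J:g)=\mathfrak p$; the witness $g_ih_j$ then splits by degree reasons. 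You instead work directly with a degree-$d$ witness $f$: you use the bigrading to replace $f$ by a bihomogeneous component of bidegree $(i,j)$, expand $\bar f=\sum_k\bar g_k\otimes\bar h_k$ in $A_1\otimes_K A_2$ with the $\bar h_k$ linearly independent, use freeness of $A_1\otimes_K A_2$ over $A_1$ to split $\bar P_1\bar f=0$ into $\bar P_1\bar g_k=0$, and extract some $g_{k_0}\notin P_1$ from $f\notin P_1+P_2$, giving $(I_1:g_{k_0})=P_1$ and $\mathrm{v}_{P_1}(I_1)\le i$ (and symmetrically $\mathrm{v}_{P_2}(I_2)\le j$). Your argument is self-contained and elementary, avoids the three external results the paper leans on, and yields the local inequality $\mathrm{v}_{P_1+P_2}(I_1+I_2)\ge \mathrm{v}_{P_1}(I_1)+\mathrm{v}_{P_2}(I_2)$ for every pair of associated primes rather than only for a minimizing pair; the paper's route is shorter on the page and identifies the colon module explicitly, which is of independent interest. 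All the delicate steps in your version (the bihomogeneous reduction, the componentwise splitting, and the use of $I_1=\bigcap_{Q}Q$ to upgrade $P_1\subseteq(I_1:g_{k_0})$ to equality) check out.
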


\begin{proof}
Let $f_{i}\in R_{i}$ be a homogeneous polynomial and $P_{i}\in \mathrm{Ass}(I_{i})$ such that $I_{i}:f_{i}=P_{i}$ and $\mathrm{v}(I_{i})=\mathrm{deg}(f_{i})$, where $i\in\{1,2\}$. Then $f_{i}\in P$ for all $P\in \mathrm{Ass}(I_{i})\setminus \{P_{i}\}$ and $f_{i}\not\in P_{i}$ for $i=1,2$. It is easy to observe that $f_{1}f_{2}\in Q$ for all $Q\in \mathrm{Ass}(I_{1}+I_{2})\setminus\{P_{1}+P_{2}\}$ and $f_{1}f_{2}\not\in P_{1}+P_{2}$. Therefore, $((I_{1}+I_{2}):f_{1}f_{2})=P_{1}+P_{2}$ and so, $\mathrm{v}(I_{1}+I_{2})\leq \mathrm{v}(I_{1})+\mathrm{v}(I_{2})$. For the reverse inequality, we will use \cite[Theorem 10]{grv21}. Let $\mathrm{v}(I_{1}+I_{2})=\mathrm{v}_{P_{1}+P_{2}}(I_{1}+I_{2})$ for some $P_{1}\in\mathrm{Ass}(I_{1})$ and $P_{2}\in\mathrm{Ass}(I_{2})$. Let $J=I_{1}+I_{2}$, $(I_{1}:P_{1})/I_{1}=\big< g_{1}+I_{1},\ldots,g_{r}+I_{1}\big>$ and $(I_{2}:P_{2})/I_{2}=\big< h_{1}+I_{2},\ldots,h_{s}+I_{2}\big>$. Consider $\phi:R\mapsto R/J$ and we write $\phi(x)=\overline{x}$ for any $x\in R$. Then we have
\begin{align*}
&\big((I_{1}+I_{2}):(P_{1}+P_{2})\big)/J \\
=& ((I_{1}+I_{2}):P_{1})\cap ((I_{1}+I_{2}):P_{2})/J\\ 
=& ((I_{1}:P_{1})+I_{2})\cap ((I_{2}:P_{2})+I_{1})/J \quad (\text{by Lemma \ref{lemadd}})\\
=& ((I_{1}:P_{1})+I_{2})/J \cap ((I_{2}:P_{2})+I_{1})/J \quad (\text{by \cite[Lemma 3.2]{hj21}})\\
=& ((I_{1}:P_{1})+ J)/J \cap ((I_{2}:P_{2})+J)/J\\
=& ((I_{1}:P_{1})\cap (I_{2}:P_{2}) +J)/J \quad (\text{by \cite[Lemma 3.2]{hj21}})\\
=& ((I_{1}:P_{1})(I_{2}:P_{2}) +J)/J \quad (\text{by \cite[Lemma 3.1]{hntt20}})\\
=& (((I_{1}:P_{1})+J)/J) (((I_{1}:P_{1})+J)/J)\\
=& \big<\overline{g_{1}},\ldots, \overline{g_{r}}\big> \big<\overline{h_{1}},\ldots,\overline{h_{s}}\big>\\
=& \big<\overline{g_{i}h_{j}}\mid 1\leq i\leq r, 1\leq j\leq s\big>
\end{align*}
By \cite[Theorem 3.2]{grv21}, we have 
$$\mbox{v}_{P_{1}+P_{2}}(J)=\mbox{min}\{\mathrm{deg}(g_{i}h_{j})\mid 1\leq i\leq r, 1\leq j\leq s\,\, \text{and}\,\, (J:g_{i}h_{j})=P_{1}+P_{2}\}.$$
Suppose $\mathrm{v}_{P_{1}+P_{2}}(J)=\mathrm{deg}(g_{i}h_{j})=\mathrm{deg}(g_{i})+\mathrm{deg}(h_{j})$ for some $g_{i}$ and $h_{j}$. Since $J,I_{1}, I_{2}$ are radical, $J:g_{i}h_{j}=P_{1}+P_{2}$ implies $I_{1}:g_{j}=P_{1}$ and $I_{2}:h_{j}=P_{2}$. Thus, 
$$\mathrm{v}(I_{1})+\mathrm{v}(I_{2})\leq \mathrm{v}_{P_{1}}(I_{1})+\mathrm{v}_{P_{2}}(I_{2})\leq \mathrm{deg}(g_{i})+\mathrm{deg}(h_{j})=\mathrm{v}_{P_{1}+P_{2}}(I_{1}+I_{2}).$$
Since $\mathrm{v}(I_{1}+I_{2})=\mathrm{v}_{P_{1}+P_{2}}(I_{1}+I_{2})$, we have $\mathrm{v}(I_{1}+I_{2})=\mathrm{v}(I_{1})+\mathrm{v}(I_{2})$.
\end{proof}

\begin{corollary}\label{v-addbinom}
Let $G=G_{1}\sqcup G_{2}$ be a graph. Then $\mathrm{v}(J_{G})=\mathrm{v}(J_{G_{1}})+\mathrm{v}(J_{G_{2}})$.
\end{corollary}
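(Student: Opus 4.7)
The plan is a direct application of Theorem \ref{v-add} to $I_1 = J_{G_1}$ and $I_2 = J_{G_2}$, viewed as ideals in the disjoint polynomial rings $R_1 = K[\{x_i, y_i \mid i \in V(G_1)\}]$ and $R_2 = K[\{x_i, y_i \mid i \in V(G_2)\}]$, respectively. Since $E(G) = E(G_1) \sqcup E(G_2)$ and no binomial generator $f_{ij}$ mixes variables from the two sides, we get $J_G = J_{G_1} R + J_{G_2} R$ in $R = R_1 \otimes_K R_2$. The radicality of $J_{G_1}$, $J_{G_2}$, and $J_G$ all follow from \cite[Corollary 2.2]{hhhrkara}.

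The only hypothesis of Theorem \ref{v-add} that requires genuine argument is
\[
\mathrm{Ass}(J_G) = \{ P_1 + P_2 \mid P_1 \in \mathrm{Ass}(J_{G_1}),\ P_2 \in \mathrm{Ass}(J_{G_2}) \}.
\]
I would establish this via the cutset description of associated primes from \cite[Corollary 3.9]{hhhrkara}. The combinatorial claim reduces to
\[
\mathscr{C}(G_1 \sqcup G_2) = \{ T_1 \sqcup T_2 \mid T_1 \in \mathscr{C}(G_1),\ T_2 \in \mathscr{C}(G_2) \},
\]
which follows directly from the definition of a cutset: deleting vertices of $G_1$ never changes the component structure of $G_2$, so each $t \in T$ with $t \in V(G_i)$ is a cut vertex of $G \setminus (T \setminus \{t\})$ if and only if $t$ is a cut vertex of $G_i \setminus (T_i \setminus \{t\})$. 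For such a cutset $T = T_1 \sqcup T_2$, the connected components of $G \setminus T$ are precisely the disjoint union of those of $G_1 \setminus T_1$ and those of $G_2 \setminus T_2$, so the defining expression for $P_T(G)$ splits as
\[
P_T(G) = P_{T_1}(G_1) + P_{T_2}(G_2),
\]
yielding the desired parametrization of $\mathrm{Ass}(J_G)$.

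Once these checks are in place, Theorem \ref{v-add} delivers $\mathrm{v}(J_G) = \mathrm{v}(J_{G_1}) + \mathrm{v}(J_{G_2})$ with no further work. I expect no serious obstacle — the heavy lifting is already done by Theorem \ref{v-add}, and the remaining input about cutsets of a disjoint union is essentially definition chasing from the notion of cut vertex and the primary decomposition of $J_G$.
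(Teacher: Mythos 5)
Your proposal is correct and follows essentially the same route as the paper: both apply Theorem \ref{v-add} to $J_{G_1}$ and $J_{G_2}$ after noting that binomial edge ideals are radical and that $\mathrm{Ass}(J_G)=\{P_{T_1}(G_1)+P_{T_2}(G_2)\mid T_1\in\mathscr{C}(G_1),\ T_2\in\mathscr{C}(G_2)\}$. The only difference is that you spell out the cutset identity $\mathscr{C}(G_1\sqcup G_2)=\{T_1\sqcup T_2\}$ explicitly, whereas the paper treats it as immediate from the known primary decomposition.
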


\begin{proof}
Since binomial edge ideals are radical ideals and $\mathrm{Ass}(J_{G})=\{P_{T_{1}}(G_{1})+P_{T_{2}}(G_{2})\mid T_{1}\in\mathscr{C}(G_{1}), T_{2}\in \mathscr{C}(G_{2})\}$, we have $\mathrm{v}(J_{G})=\mathrm{v}(J_{G_{1}})+\mathrm{v}(J_{G_{2}})$ by Theorem \ref{v-add}.
\end{proof}

Now, we will establish the relation between $\mathrm{v}(J_{G})$ and $\mathrm{v}(\mathrm{in}_{<}(J_{G}))$ for certain class of graphs. For that, let us first discuss the primary decomposition of $\mathrm{v}(\mathrm{in}_{<}(J_{G}))$.
\medskip

Let $G$ be a simple graph. Let $T\in \mathscr{C}(G)$ and $G_{1},\ldots,G_{c(T)}$ be the connected components of $G\setminus T$. For $\mathbf{v}=(v_{1},\ldots,v_{c(T)})\in V(G_{1})\times\cdots\times V(G_{c(T)})$, we consider the following prime ideal:
$$P_{T}(\mathbf{v})=\big<x_{i},y_{i}\mid i\in T\big>+ \sum_{k=1}^{c(T)}\big< \{x_{i},y_{j}\mid i,j\in V(G_{k}), i<v_{k}, j>v_{k}\}\big>.$$
By \cite[Lemma 1]{s2acc}, we get
$$\mathrm{in}_{<}(P_{T}(G))=\bigcap_{\mathbf{v}\in V(G_{1})\times\cdots\times V(G_{c(T)})} P_{T}(\mathbf{v}).$$
Since $\mathrm{in}_{<}(J_{G})$ is radical, by \cite[Corollary 1.12]{cdg18}, we have 
$$\mathrm{in}_{<}(J_{G})=\bigcap_{T\in\mathscr{C}(G)}  \mathrm{in}_{<}(P_{T}(G)).$$

\begin{proposition}[{\cite[Proposition 3.4]{ki_girth}}]\label{propassinJG}
Let $G$ be a simple graph. Then we have 
$$\mathrm{Ass}(\mathrm{in}_{<}(J_{G}))=\{P_{T}(\mathbf{v})\mid T\in \mathscr{C}(G)\,\, \text{and}\,\, \mathbf{v}\in V(G_{1})\times\cdots\times V(G_{c(T)})\}.$$
\end{proposition}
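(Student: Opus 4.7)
The plan is to combine the two primary decomposition formulas recalled just above the proposition: chaining them gives
\[
\mathrm{in}_{<}(J_G) \;=\; \bigcap_{T \in \mathscr{C}(G)}\;\bigcap_{\mathbf{v} \in V(G_1)\times\cdots\times V(G_{c(T)})} P_T(\mathbf{v}).
\]
The initial ideal $\mathrm{in}_{<}(J_G)$ is squarefree by Remark \ref{remgrob}, hence radical, so its associated primes coincide with its minimal primes. Since any prime containing $\mathrm{in}_{<}(J_G)$ must contain one of the finitely many $P_T(\mathbf{v})$'s appearing in the intersection, the inclusion $\mathrm{Ass}(\mathrm{in}_{<}(J_G)) \subseteq \{P_T(\mathbf{v})\}$ is immediate. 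The substance of the argument lies in the reverse inclusion, which I will reduce to showing that the primes $P_T(\mathbf{v})$ are pairwise incomparable.

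Concretely, assume $P_{T'}(\mathbf{v}') \subseteq P_T(\mathbf{v})$; I claim $(T',\mathbf{v}') = (T,\mathbf{v})$. From the explicit description, $x_i$ (respectively $y_i$) is a generator of $P_T(\mathbf{v})$ exactly when $i \in T$ or $i \in V(G_k)$ with $i < v_k$ (respectively $i > v_k$). First, if $i \in T'$ then both $x_i,y_i \in P_{T'}(\mathbf{v}') \subseteq P_T(\mathbf{v})$; since the component part contributes at most one of $x_i,y_i$, this forces $i \in T$, so $T' \subseteq T$. Second, for each anchor $i = v_k$, neither $x_{v_k}$ nor $y_{v_k}$ lies in $P_T(\mathbf{v})$, hence neither can lie in $P_{T'}(\mathbf{v}')$; writing $G'_\ell$ for the component of $G\setminus T'$ containing $v_k$ (which exists since $v_k \notin T \supseteq T'$), this forces $v_k = v'_\ell$. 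In particular, two distinct anchors $v_{k_1}\neq v_{k_2}$ cannot lie in a common component of $G\setminus T'$, so each component of $G\setminus T'$ contains at most one component of $G\setminus T$.

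The part I expect to be the main obstacle is ruling out $T' \subsetneq T$, and here the cutset hypothesis becomes essential. If $j \in T\setminus T'$, then by the definition of $\mathscr{C}(G)$, $j$ is a cut vertex of $G\setminus(T\setminus\{j\})$, so deleting $j$ from this graph strictly increases the number of components; this forces $j$ to be adjacent to vertices in at least two distinct components $G_{k_1}, G_{k_2}$ of $G\setminus T$. Since $T' \subseteq T\setminus\{j\}$, the vertex $j$ together with $V(G_{k_1})$ and $V(G_{k_2})$ all lie in $V(G)\setminus T'$ and are mutually connected through $j$ in $G\setminus T'$; hence $G_{k_1}$ and $G_{k_2}$ sit in a common component of $G\setminus T'$, contradicting the injectivity established in the previous paragraph. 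Therefore $T' = T$, the component decompositions of $G\setminus T$ and $G\setminus T'$ coincide, and the identity $v_k = v'_k$ for every $k$ yields $\mathbf{v}' = \mathbf{v}$, completing the incomparability argument and hence the proof.
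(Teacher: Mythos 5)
Your argument is correct, but note that the paper does not actually prove this statement: it is quoted verbatim from \cite[Proposition 3.4]{ki_girth} and used as a black box, so there is no internal proof to compare against. What you have written is a legitimate self-contained derivation from the two decomposition formulas recalled just above the proposition, namely $\mathrm{in}_{<}(J_G)=\bigcap_{T}\mathrm{in}_{<}(P_T(G))$ and $\mathrm{in}_{<}(P_T(G))=\bigcap_{\mathbf{v}}P_T(\mathbf{v})$. Since $\mathrm{in}_{<}(J_G)$ is a square-free monomial ideal (its minimal generators $u_{\pi}x_iy_j$ from Remark \ref{remgrob} are square-free because the vertices of an admissible path are distinct), it is radical, so its associated primes are exactly the minimal ones among the $P_T(\mathbf{v})$, and the whole content is indeed the pairwise incomparability you isolate. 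Your three steps check out: both $x_i$ and $y_i$ lie in $P_T(\mathbf{v})$ precisely when $i\in T$, which gives $T'\subseteq T$; neither $x_{v_k}$ nor $y_{v_k}$ lies in $P_T(\mathbf{v})$, which pins each anchor $v_k$ to the anchor of the component of $G\setminus T'$ containing it and makes the induced map on components injective; and the cut-vertex condition on any $j\in T\setminus T'$ merges two anchored components of $G\setminus T$ inside a single component of $G\setminus T'$, contradicting that injectivity. This is presumably close in spirit to the proof in \cite{ki_girth}, and it has the advantage of making the paper's use of the proposition self-contained.
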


\begin{lemma}\label{leminint}
Let $\mathfrak{p}_{1},\ldots,\mathfrak{p}_{k}$ be graded prime ideals of $R$. If $\mathrm{in}_{<}(\bigcap_{i=1}^{k} \mathfrak{p}_{i})$ is a square-free monomial ideal, then
$$ \mathrm{in}_{<}\bigg(\bigcap_{i=1}^{k} \mathfrak{p}_{i}\bigg)=\bigcap_{i=1}^{k} \big(\mathrm{in}_{<}(\mathfrak{p}_{i})\big).$$
\end{lemma}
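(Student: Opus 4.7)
The plan is to prove equality via two inclusions, only one of which requires the square-free hypothesis. The inclusion $\mathrm{in}_<\big(\bigcap_{i=1}^k \mathfrak{p}_i\big) \subseteq \bigcap_{i=1}^k \mathrm{in}_<(\mathfrak{p}_i)$ will follow purely formally from monotonicity of $\mathrm{in}_<$: since $\bigcap_i \mathfrak{p}_i \subseteq \mathfrak{p}_j$ for every $j$, applying $\mathrm{in}_<$ gives $\mathrm{in}_<\big(\bigcap_i \mathfrak{p}_i\big) \subseteq \mathrm{in}_<(\mathfrak{p}_j)$ for each $j$, and intersecting over $j$ finishes this direction. No hypothesis is used here.

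For the reverse inclusion, I would first reduce to monomials: $\bigcap_i \mathrm{in}_<(\mathfrak{p}_i)$ is a monomial ideal (being an intersection of monomial ideals), so it suffices to show every monomial $m$ in this intersection lies in $\mathrm{in}_<\big(\bigcap_i \mathfrak{p}_i\big)$. For each index $i$, the condition $m \in \mathrm{in}_<(\mathfrak{p}_i)$ provides some $g_i \in \mathfrak{p}_i$ whose leading monomial divides $m$; multiplying $g_i$ by the quotient monomial $m/\mathrm{in}_<(g_i)$ produces $f_i \in \mathfrak{p}_i$ with $\mathrm{in}_<(f_i) = m$ on the nose. Then the product $f := f_1 f_2 \cdots f_k$ lies in every $\mathfrak{p}_j$ (because each ideal $\mathfrak{p}_j$ absorbs its own factor $f_j$), hence in $\bigcap_i \mathfrak{p}_i$. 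Multiplicativity of leading monomials in the integral domain $R$ gives $\mathrm{in}_<(f) = m^k$, so $m^k$ belongs to $\mathrm{in}_<\big(\bigcap_i \mathfrak{p}_i\big)$.

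The last step — and the only place the hypothesis enters — is to invoke radicality: square-free monomial ideals are radical, so $m^k \in \mathrm{in}_<\big(\bigcap_i \mathfrak{p}_i\big)$ forces $m \in \mathrm{in}_<\big(\bigcap_i \mathfrak{p}_i\big)$. This closes the second inclusion and finishes the proof. The conceptual heart of the argument is exactly this radicality step; without the square-free assumption on the left-hand side, the multiplication trick only yields a power of $m$ inside $\mathrm{in}_<\big(\bigcap_i \mathfrak{p}_i\big)$, and one cannot conclude (as is easily seen already for $\mathfrak{p}_1 = (x)$, $\mathfrak{p}_2 = (x+y)$ in $K[x,y]$). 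The only technical subtlety along the way is arranging $\mathrm{in}_<(f_i) = m$ exactly rather than just some divisor of $m$, but this is routine from the description of an initial ideal as the monomial ideal spanned by leading terms of ideal elements.
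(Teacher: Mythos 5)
Your argument is correct and is essentially identical to the paper's own proof: both directions are handled the same way, with the product $f=f_1\cdots f_k$ giving $m^k\in \mathrm{in}_{<}\bigl(\bigcap_i \mathfrak{p}_i\bigr)$ and the square-free (hence radical) hypothesis descending to $m$ itself. The only difference is that you spell out the routine adjustment ensuring $\mathrm{in}_{<}(f_i)=m$ exactly, which the paper leaves implicit.
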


\begin{proof}
We have $\mathrm{in}_{<}(\bigcap_{i=1}^{k} \mathfrak{p}_{i})\subseteq \bigcap_{i=1}^{k} (\mathrm{in}_{<}(\mathfrak{p}_{i}))$. Let $m\in \bigcap_{i=1}^{k} (\mathrm{in}_{<}(\mathfrak{p}_{i}))$ be a monomial. Then there exists $f_{i}\in \mathfrak{p}_{i}$ such that $\mathrm{in}_{<}(f_{i})=m$ for each $i\in \{1,\ldots,k\}$. Consider the polynomial $f=f_{1}\cdots f_{k}$. Note that $f\in \bigcap_{i=1}^{k} \mathfrak{p}_{i}$ and thus, $\mathrm{in}_{<}(f)=m^k\in \mathrm{in}_{<}(\bigcap_{i=1}^{k} \mathfrak{p}_{i})$. It is given that $\mathrm{in}_{<}(\bigcap_{i=1}^{k} \mathfrak{p}_{i})$ is square-free, i.e., a radical ideal. Therefore, $m\in \mathrm{in}_{<}(\bigcap_{i=1}^{k} \mathfrak{p}_{i})$ and the equality follows.
\end{proof}

\begin{theorem}\label{thmvin}
Let $G$ be a graph. If there exists $T'\in\mathscr{C}(G)$ such that $\mathrm{v}(\mathrm{in}_{<}(J_{G}))$ is attained for some $P_{T'}(\mathbf{v'})$ and $\mathrm{in}_{<}(\bigcap_{T\in \mathscr{C}(G)\setminus \{T'\}} P_{T}(G))$ is radical, then 
$$\mathrm{v}(J_G) \leq \mathrm{v}(\mathrm{in}_< (J_G)).$$
\end{theorem}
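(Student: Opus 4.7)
The plan is to transfer a witness for $\mathrm{v}(\mathrm{in}_<(J_G))$ back to $J_G$ via a Gr\"obner lift. Set $I = \bigcap_{T \in \mathscr{C}(G) \setminus \{T'\}} P_T(G)$, so that $J_G = I \cap P_{T'}(G)$. I will produce a homogeneous $f \in I$, of degree $\mathrm{v}(\mathrm{in}_<(J_G))$, satisfying $(J_G : f) = P_{T'}(G)$; this immediately yields $\mathrm{v}(J_G) \leq \mathrm{v}_{T'}(J_G) \leq \deg(f) = \mathrm{v}(\mathrm{in}_<(J_G))$.

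First, I would replace the given witness of $\mathrm{v}(\mathrm{in}_<(J_G))$ by a monomial. Let $f_0$ be homogeneous of degree $\mathrm{v}(\mathrm{in}_<(J_G))$ with $(\mathrm{in}_<(J_G) : f_0) = P_{T'}(\mathbf{v}')$. Because $\mathrm{in}_<(J_G)$ is a squarefree monomial ideal and each associated prime $P_T(\mathbf{v})$ is generated by variables, the condition $f_0 \in P_T(\mathbf{v})$ forces every monomial of $f_0$ to lie in $P_T(\mathbf{v})$. Combined with $f_0 \notin P_{T'}(\mathbf{v}')$ (automatic from the radicality of $\mathrm{in}_<(J_G)$), some monomial $m$ in the support of $f_0$ satisfies $m \in P_T(\mathbf{v})$ for every $(T,\mathbf{v}) \neq (T',\mathbf{v}')$ in $\mathrm{Ass}(\mathrm{in}_<(J_G))$, while $m \notin P_{T'}(\mathbf{v}')$. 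Radicality of $\mathrm{in}_<(J_G)$ then yields $(\mathrm{in}_<(J_G):m) = P_{T'}(\mathbf{v}')$, and by minimality $\deg(m) = \mathrm{v}(\mathrm{in}_<(J_G))$.

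Next, for each $T \neq T'$ the containments $m \in P_T(\mathbf{v})$ for all $\mathbf{v}$ give $m \in \bigcap_{\mathbf{v}} P_T(\mathbf{v}) = \mathrm{in}_<(P_T(G))$. The hypothesis that $\mathrm{in}_<(I)$ is radical (equivalently, squarefree) activates Lemma \ref{leminint} applied to the primes $\{P_T(G) : T \neq T'\}$, producing the identity
$$\mathrm{in}_<(I) = \bigcap_{T \neq T'} \mathrm{in}_<(P_T(G)),$$
so $m \in \mathrm{in}_<(I)$. A standard Gr\"obner basis argument now furnishes a homogeneous $f \in I$ with $\mathrm{in}_<(f) = m$ and $\deg(f) = \deg(m)$.

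Finally, I would verify $(J_G : f) = P_{T'}(G)$. Since $f \in I$, we have $f \in P_T(G)$ for every $T \neq T'$. If $f$ were in $P_{T'}(G)$, then $m = \mathrm{in}_<(f) \in \mathrm{in}_<(P_{T'}(G)) = \bigcap_{\mathbf{v}} P_{T'}(\mathbf{v}) \subseteq P_{T'}(\mathbf{v}')$, contradicting $m \notin P_{T'}(\mathbf{v}')$. Radicality of $J_G$ then forces $(J_G : f) = P_{T'}(G)$, completing the proof. The main obstacle is the first step: carefully exploiting the monomial structure and radicality of $\mathrm{in}_<(J_G)$ to replace a potentially non-monomial witness by a monomial of the same degree. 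Once that is in hand, the two radicality hypotheses of the theorem (for $\mathrm{in}_<(J_G)$ itself, and for $\mathrm{in}_<(I)$) combine with Lemma \ref{leminint} to cleanly execute the Gr\"obner lift.
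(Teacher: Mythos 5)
Your proposal is correct and follows essentially the same route as the paper: reduce to a monomial witness $m$ for $\mathrm{v}(\mathrm{in}_<(J_G))$, locate $m$ in $\bigcap_{T\neq T'}\mathrm{in}_<(P_T(G))$, use the radicality hypothesis together with Lemma \ref{leminint} to identify this intersection with $\mathrm{in}_<\bigl(\bigcap_{T\neq T'}P_T(G)\bigr)$, lift $m$ to $f$ in that intersection, and rule out $f\in P_{T'}(G)$ by the same contradiction. The only difference is that you spell out the reduction of a homogeneous witness to a monomial one, a step the paper asserts without argument; your justification of it is valid.
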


\begin{proof}
Let $G'_{1},\ldots, G'_{c(T')}$ be the connected components of $G\setminus T'$. By the given hypothesis, there exists a square-free monomial $m$ such that $(\mathrm{in}_<(J_G):m) = P_{T'}(\mathbf{v'})$ for some $\mathbf{v'}\in V(G'_{1})\times\cdots\times V(G'_{c(T')})$ and $\mathrm{v}(\mathrm{in}_{<}(J_{G}))=\mathrm{deg}(m)$. Then

\begin{align*}
& \big(\mathrm{in}_<(J_G):m \big) = P_{T'}(\mathbf{v'})\\
\implies & \bigg(\bigcap_{T \in\mathcal{C}(G)} \mathrm{in}_< (P_T(G)):m\bigg) = P_{T'}(\mathbf{v'})\\  
\implies & \bigcap_{T \in\mathcal{C}(G)} \big(\mathrm{in}_< (P_T(G)):m \big) = P_{T'}(\mathbf{v'}) \\ 
\implies  & \bigcap_{T \in\mathcal{C}(G)} \bigg( \big(\bigcap_{\mathbf{v} \in V(G_1) \times \cdots \times V(G_{\mathcal{C}(T)})} P_T (\mathbf{v})\big) : m \bigg) = P_{T'}(\mathbf{v'})\\
\implies & \bigcap_{\substack{T \in \mathcal{C}(G) \\ \mathbf{v} \in V(G_1) \times \cdots \times V(G_{\mathcal{C}(T)})}} (P_T(\mathbf{v}):m) = P_{T'}(\mathbf{v'}).
\end{align*} 
\noindent Therefore, by Proposition \ref{propassinJG}, we get $m\in P_{T}(\mathbf{v})$ for all $P_{T}(\mathbf{v})\in \mathrm{Ass}(\mathrm{in}_{<}(J_{G}))\setminus\{P_{T'}(\mathbf{v'})\}$ and $m\not\in P_{T'}(\mathbf{v'})$. This gives us that $ m\in \bigcap_{T\in \mathcal{A} } \mathrm{in}_<(P_T(G))$, where $\mathcal{A}=\mathscr{C}(G)\setminus \{T'\}$. Since $\mathrm{in}_<(\bigcap_{T \in \mathcal{A}} P_T(G))$ is radical, it follows from Lemma \ref{leminint} that
$$\bigcap_{T\in \mathcal{A} } \mathrm{in}_<(P_T(G))= \mathrm{in}_<\bigg(\bigcap_{T \in \mathcal{A}} P_T(G)\bigg).$$ 

\noindent Thus, we have $m\in \mathrm{in}_<(\bigcap_{T \in \mathcal{A}} P_T(G))$ and so, there exist $f \in \bigcap_{T\in \mathcal{A}} P_T(G) $ such that $\mathrm{in}_{<}(f) = m$. Suppose $f \in P_{T'}(G)$. Then $\mathrm{in}_{<}(f)=m \in \mathrm{in}_{<} (P_{T'}(G))$, which imply $m\in P_{T'}(\mathbf{v'})$ and this gives a contradiction as $m \not\in P_{T'}(\mathbf{v'})$. Therefore, $f \not\in P_{T'}(G)$ and we get $J_G : f = P_{T'}(G)$. Hence $\mathrm{v}(J_G) \leq \mathrm{v}(\mathrm{in}_< (J_G))$.
\end{proof}

Conca and Varbaro in \cite{cv20} introduced the notion of Knutson ideals inspired by the work of Allen Knutson \cite{knutson} on compatibly split ideals and degeneration.

\begin{definition}[{Knutson ideals}]\label{knutson}{\rm
Let $f\in R=K[x_{1},\ldots, x_{n}]$ be a polynomial such that its leading term $\mathrm{in}_{<}(f)$ is a square-free monomial for some term order $<$. Define $C_{f}$ to be the smallest set of ideals satisfying the following conditions:
\begin{enumerate}
\item $\big<f\big>\in C_{f}$;
\item If $I\in C_{f}$, then $I:J\in C_{f}$ for every ideal $J\subseteq R$;
\item If $I$ and $J$ are in $C_{f}$, then also $I+J$ and $I\cap J$ must be in $C_{f}$.
\end{enumerate} 
If $I$ is an ideal in $C_{f}$, we say that $I$ is a \textit{Knutson ideal} associated with $f$. More generally, we say that $I$ is a \textit{Knutson ideal} if $I\in C_{f}$ for some $f$.
}
\end{definition}

Matsuda introduced the notion of weakly closed graphs \cite[Definition 2.1]{matsuda18} as a generalization of closed graphs and studied the $F$-purity of binomial edge ideals of weakly closed graphs. Later, Seccia \cite[Theorem 4.1]{seccia22} proved that a graph $G$ is weakly closed if and only if $J_{G}$ is a Knutson ideal. As an application of Theorem \ref{thmvin}, we give the following corollary regarding the $\mathrm{v}$-number of binomial edge ideals of weakly closed graphs.


\begin{corollary}\label{corweakly}
Let $G$ be a weakly closed graph. Then $\mathrm{v}(J_{G})\leq \mathrm{v}(\mathrm{in}_{<}(J_{G}))$. 
\end{corollary}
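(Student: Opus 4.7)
The plan is to deduce the corollary by verifying the hypotheses of Theorem \ref{thmvin}. Since $G$ is weakly closed, Seccia's theorem \cite[Theorem 4.1]{seccia22} gives that $J_{G}$ is a Knutson ideal; fix $f$ with $J_{G}\in C_{f}$. By Proposition \ref{propassinJG}, every associated prime of $\mathrm{in}_{<}(J_{G})$ has the form $P_{T}(\mathbf{v})$, so there exist $T'\in\mathscr{C}(G)$ and $\mathbf{v'}\in V(G'_{1})\times\cdots\times V(G'_{c(T')})$ such that $\mathrm{v}(\mathrm{in}_{<}(J_{G}))=\mathrm{v}_{P_{T'}(\mathbf{v'})}(\mathrm{in}_{<}(J_{G}))$. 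This already secures the first hypothesis of Theorem \ref{thmvin}; what remains is to show that $\mathrm{in}_{<}\bigl(\bigcap_{T\in\mathscr{C}(G)\setminus\{T'\}}P_{T}(G)\bigr)$ is radical.

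The central claim is that the intersection $\bigcap_{T\in\mathscr{C}(G)\setminus\{T'\}}P_{T}(G)$ lies in $C_{f}$. I would prove this in two steps. First, I would show that every minimal prime $P_{T}(G)$ of $J_{G}$ belongs to $C_{f}$. Since $J_{G}=\bigcap_{T\in\mathscr{C}(G)}P_{T}(G)$ is the minimal primary decomposition of the radical ideal $J_{G}$, for each $T\in\mathscr{C}(G)$ there exists an element $h_{T}\in\bigcap_{T''\neq T}P_{T''}(G)$ with $h_{T}\notin P_{T}(G)$. Using primality of $P_{T}(G)$, one checks directly that $J_{G}:h_{T}=P_{T}(G)$. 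By axiom (2) of Definition \ref{knutson} applied to the principal ideal $\langle h_{T}\rangle$, we get $P_{T}(G)\in C_{f}$. Second, by axiom (3) (closure under intersection), we conclude $\bigcap_{T\in\mathscr{C}(G)\setminus\{T'\}}P_{T}(G)\in C_{f}$.

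Since any ideal in $C_{f}$ has a square-free initial ideal (the defining property of Knutson ideals, as established in \cite{cv20}), $\mathrm{in}_{<}\bigl(\bigcap_{T\in\mathscr{C}(G)\setminus\{T'\}}P_{T}(G)\bigr)$ is a square-free monomial ideal, hence radical. Both hypotheses of Theorem \ref{thmvin} are now in place, and the desired inequality $\mathrm{v}(J_{G})\leq\mathrm{v}(\mathrm{in}_{<}(J_{G}))$ follows.

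The only delicate step is the identification of each minimal prime as a colon ideal $J_{G}:\langle h_{T}\rangle$, which is the main obstacle in so far as it requires a careful use of the minimality of the primary decomposition of $J_{G}$; once this is in hand, the result reduces to invoking the closure properties of $C_{f}$ together with Conca--Varbaro's square-freeness theorem, and then feeding everything into Theorem \ref{thmvin}.
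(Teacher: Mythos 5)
Your proposal is correct and follows essentially the same route as the paper: both verify the hypotheses of Theorem \ref{thmvin} by using Seccia's characterization to get $J_{G}\in C_{f}$, deducing that each minimal prime $P_{T}(G)$ (hence the intersection $\bigcap_{T\neq T'}P_{T}(G)$) is Knutson, and invoking the radicality of initial ideals of Knutson ideals. You merely make explicit two points the paper leaves implicit — the colon-ideal argument $P_{T}(G)=J_{G}:h_{T}$ showing the minimal primes lie in $C_{f}$, and the fact that $T'$ should be chosen where $\mathrm{v}(\mathrm{in}_{<}(J_{G}))$ is attained (harmless either way, since the radicality condition holds for every $T'$).
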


\begin{proof}
Since $G$ is weakly closed, $J_{G}$ is a Knutson ideal. Then by definition of Knutson ideals, it follows that any associated prime of $J_{G}$ is Knutson. Let $\mathrm{v}(J_{G})=\mathrm{v}_{T'}(J_{G})$ for some $T'\in \mathscr{C}(G)$. Then $\bigcap_{T\in\mathscr{C}(G)\setminus\{T'\}} P_{T}(G)$ is Knutson by condition (3) of Definition \ref{knutson}. It has been proved in \cite{knutson} that the initial ideal of any Knutson ideal is radical. Thus, $G$ satisfies the hypothesis of Theorem \ref{thmvin} and hence, $\mathrm{v}(J_{G})\leq \mathrm{v}(\mathrm{in}_{<}(J_{G}))$. 
\end{proof}

\begin{figure}[H]
	\centering
	\begin{subfigure}{0.45\textwidth}
	\centering
	\begin{tikzpicture}
  [scale=1,auto=left,every node/.style={circle,scale=0.6}]
 
  \node[draw] (n1) at (0,1)  {$2$};
  \node[draw] (n2) at (-0.8660,-0.5)  {$3$};
  \node[draw] (n3) at (0.8660,-0.5) {$4$};
  \node[draw] (n4) at (0,2*1) {$1$};
  \node[draw] (n5) at (-2*0.8660,-2*0.5) {$6$};
  \node[draw] (n6) at (2*0.8660,-2*0.5)  {$5$};
 
  \foreach \from/\to in {n1/n2, n1/n3, n2/n3, n1/n4, n2/n5, n3/n6}
    \draw[] (\from) -- (\to);
    
\end{tikzpicture}
\caption{Graph $G$ with $\mathrm{v}(\mathrm{in}_{<}(J_{G}))=4$}\label{figin1}
	\end{subfigure}
	\begin{subfigure}{0.45\textwidth}
	\centering
	\begin{tikzpicture}
 [scale=1,auto=left,every node/.style={circle,scale=0.6}]
 
  \node[draw] (n1) at (0,1)  {$1$};
  \node[draw] (n2) at (-0.8660,-0.5)  {$2$};
  \node[draw] (n3) at (0.8660,-0.5) {$3$};
  \node[draw] (n4) at (0,2*1) {$4$};
  \node[draw] (n5) at (-2*0.8660,-2*0.5) {$5$};
  \node[draw] (n6) at (2*0.8660,-2*0.5)  {$6$};
 
  \foreach \from/\to in {n1/n2, n1/n3, n2/n3, n1/n4, n2/n5, n3/n6}
    \draw[] (\from) -- (\to);
   
\end{tikzpicture}
\caption{Graph $G$ with $\mathrm{v}(\mathrm{in}_{<}(J_{G}))=3$}\label{figin2}
	\end{subfigure}
	
	\caption{Graph $G$ with different labelling and different $\mathrm{v}(\mathrm{in}_{<}(J_{G}))$}\label{figin}
\end{figure}

\begin{example}\label{exmvin}{\rm
Consider the graph $G$ in \Cref{figin} with the labelling shown in (a). Then using the reduced Gr\"{o}ner basis of $J_{G}$ discussed in Remark \ref{remgrob}, we get 
$$\mathrm{in}_<(J_G) = \big< x_4y_5,x_3y_6, x_3y_4, x_2y_4, x_2y_3, x_1y_2, x_4y_3y_6, x_5y_3y_4y_6 \big>.$$
By \cite[Procedure A1]{grv21} and Macaulay2 \cite{mac2}, we get $\mathrm{v}(J_G) = 3$ and $\mathrm{v}(\mathrm{in}_<(J_G)) = 4$. Therefore, in this case, we have $\mathrm{v}(J_G) < \mathrm{v} (\mathrm{in}_<(J_G))$.\par 

Similarly, considering the same graph $G$ in \Cref{figin} with the labelling given in (b), we get $\mathrm{v}(\mathrm{in}_<(J_G)) = 3$. Thus, $\mathrm{v}(J_G) = \mathrm{v} (\mathrm{in}_<(J_G))$ in this case.\par 
Since the primary decomposition of $J_{G}$ does not depend on the labelling of $V(G)$, $\mathrm{v}(J_{G})$ remains the same for any labelling of a given graph. But, $\mathrm{v}(\mathrm{in}_{<}(J_{G}))$ may not remain same with different labelling of $V(G)$. 
}
\end{example}

We will now classify those graphs whose binomial edge ideals have $\mathrm{v}$-number $1$.
\begin{definition}{\rm
Let $G$ be a graph and $v\not\in V(G)$ be a vertex. The \textit{cone} of $v$ on $G$, denoted by $\mathrm{cone}(v,G)$, is the graph with vertex set $V(G)\cup\{v\}$ and edge set $E(G)\cup \{\{u,v\}\mid u\in V(G)\}$.
}
\end{definition}

\begin{theorem}\label{thmcone}
Let $G$ be a simple connected graph. Then $\mathrm{v}(J_{G})=1$ if and only if $G=\mathrm{cone}(v,H)$ for some non-complete graph $H$.
\end{theorem}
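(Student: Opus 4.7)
The plan is to prove both implications using the formula $(J_G : x_i) = J_{G_i}$ from Proposition \ref{complete} and the identification $\mathrm{v}_{\emptyset}(J_G) = \mathrm{min\mbox{-}comp}(G)$ from Theorem \ref{vphi}. The single crucial observation driving everything is that any binomial edge ideal $J_{G'}$ has zero degree-one component, because it is a graded ideal generated in degree $2$.

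For sufficiency, suppose $G = \mathrm{cone}(v, H)$ with $H$ non-complete. Then $\mathcal{N}_G(v) = V(G) \setminus \{v\}$, so in $G_v$ every pair of distinct vertices is adjacent --- either through the existing spokes at $v$ or via a newly added edge between two neighbours of $v$ --- giving $G_v = K_n$. By Proposition \ref{complete}, $(J_G : x_v) = J_{G_v} = J_{K_n} = P_{\emptyset}(G)$, where the last equality holds because $G$ is connected. Hence $\mathrm{v}_{\emptyset}(J_G) \leq 1$ and in particular $\mathrm{v}(J_G) \leq 1$. Since $H$ is non-complete, $G$ is non-complete, so $J_G$ is not prime (forcing $\mathrm{v}(J_G) \geq 1$); combining these gives $\mathrm{v}(J_G) = 1$.

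For necessity, let $f$ be a homogeneous linear form and $T \in \mathscr{C}(G)$ with $(J_G : f) = P_T(G)$. I would first prove that $T = \emptyset$: if some $t \in T$ existed, then $x_t \in P_T(G)$ would give $f x_t \in J_G$, hence $f \in (J_G : x_t) = J_{G_t}$ via Proposition \ref{complete}; but $J_{G_t}$ has no nonzero linear part, forcing $f = 0$, which contradicts $f \notin P_T(G)$. Thus $T = \emptyset$, and by Theorem \ref{vphi} we have $\mathrm{min\mbox{-}comp}(G) = \mathrm{v}_{\emptyset}(J_G) \leq 1$. Since $\mathrm{v}(J_G) = 1 \neq 0$, $G$ cannot be complete, and so $\mathrm{min\mbox{-}comp}(G) = 1$; hence there exists $v \in V(G)$ with $G_v$ a disjoint union of complete graphs. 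Since $G$ is connected, so is $G_v$, so $G_v = K_n$. If any $u \neq v$ were non-adjacent to $v$ in $G$, then $u \notin \mathcal{N}_G(v)$ would remain non-adjacent to $v$ in $G_v$ (the $G \mapsto G_v$ operation only inserts edges among $\mathcal{N}_G(v)$), contradicting $G_v = K_n$. Therefore $v$ is adjacent to every other vertex, $G = \mathrm{cone}(v, H)$ with $H := G \setminus \{v\}$, and $H$ must be non-complete (else $G$ itself would be complete, forcing $\mathrm{v}(J_G) = 0$).

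The main obstacle I anticipate is the $T = \emptyset$ reduction; the rest is geometric bookkeeping that follows directly from the definition of $G_v$ and Theorem \ref{vphi}. The degree-2 generation of $J_{G_t}$ is what collapses the case $T \neq \emptyset$, and making this step rigorous (rather than a slogan) is where the proof earns its keep.
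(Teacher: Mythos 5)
Your proof is correct, and while the overall skeleton (sufficiency via $(J_G:x_v)=J_{G_v}=P_{\emptyset}(G)$, then a reduction to $T=\emptyset$ for necessity) matches the paper, the second half of your necessity argument takes a genuinely different route. The paper, after establishing $T=\emptyset$, writes each $P_T(G)$ as $\langle x_i,y_i\mid i\in T\rangle+I_T$ with $I_T$ generated in degree $2$, deduces that the linear form $f$ lies in the square-free monomial ideal $\bigcap_{T\neq\emptyset}\langle x_i,y_i\mid i\in T\rangle$, hence is (up to scalar) a single variable $x_v$ or $y_v$ with $v$ belonging to every nonempty cutset, and then runs a separate combinatorial argument (choosing a minimal separator inside $\mathcal{N}_G(v)$) to show such a $v$ must be universal. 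You instead invoke Theorem \ref{vphi}: the linear witness gives $\mathrm{v}_{\emptyset}(J_G)\leq 1$, hence $\mathrm{min\mbox{-}comp}(G)=1$, so some single vertex $v$ completes $G$, and connectivity plus the fact that $G\mapsto G_v$ only inserts edges among $\mathcal{N}_G(v)$ forces $\mathcal{N}_G[v]=V(G)$. This is cleaner and reuses machinery already proved, at the cost of depending on Theorem \ref{vphi} where the paper's argument is self-contained at this point. Your $T=\emptyset$ step is also a mild variant (you pass through $(J_G:x_t)=J_{G_t}$ and its vanishing degree-one part, rather than through primeness of $P_{\emptyset}(G)$ as the paper does), but both hinge on the same observation that binomial edge ideals are generated in degree $2$.
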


\begin{proof}
Suppose $\mathrm{v}(J_{G})=1$. Then there exists a homogeneous linear polynomial $f$ such that $(J_{G}:f)=P_{T}(G)$ for some $T\in \mathscr{C}(G)$. Suppose $T\neq \emptyset$. Then there exists $i\in T$ and so, $x_{i},y_{i}\in P_{T}(G)$. Therefore, $fx_{i}\in J_{G}\subseteq P_{\emptyset}(G)$. But, $P_{\emptyset}(G)$ can not contain any linear polynomial, and this gives a contradiction. Therefore, the only possibility is $T=\emptyset$, i.e., $J_{G}:f=P_{\emptyset}(G)$. Since $J_{G}=\cap_{T\in \mathscr{C}(G)} P_{T}(G)$, we have $f\in P_{T}(G)$ for all $T\in \mathscr{C}(G)$ with $T\neq \emptyset$ and $f\not\in P_{\emptyset}(G)$. Now each $P_{T}(G)$ can be written as $P_{T}(G)=\big< x_{i},y_{i}\mid i\in T\big>+I_{T}$, where $I_{T}$ is an ideal generated by degree two homogeneous binomials. Since $f$ is linear and $I_{T}$ is a homogeneous binomial ideal of degree two, we have 
$$f\in \bigcap_{T\in \mathscr{C}(G)\setminus\{\emptyset\}} \big<x_{i},y_{i}\mid i\in T\big>.$$
Now, $f$ belongs to a square-free monomial ideal and degree of $f$ is $1$ together imply there exists $v\in V(G)$ such that $v\in T$ for all $T\in \mathscr{C}(G)\setminus\{\emptyset\}$. Suppose there exists $u\in V(G)$ such that $u\not\in \mathcal{N}_{G}(v)$. Take $T\subseteq \mathcal{N}_{G}(v)$ such that there is no path from $u$ to $v$ in $G\setminus T$ and $T$ is minimal with such property. Then it is clear that $T\in \mathscr{C}(G)$ and $T\neq \emptyset$ as $G$ is connected. But, $v\not\in T$ and $T$ is non-empty give a contradiction. Hence, $\mathcal{N}_{G}[v]=V(G)$, i.e., $G=\mathrm{cone}(v,H)$, where $H$ is the induced subgraph of $G$ on $\mathcal{N}_{G}(v)$. Suppose $H$ is complete. Then $G$ is complete and in this case, $\mathrm{v}(J_{G})=0$ as $J_{G}$ is prime. Therefore, $H$ is non-complete as $\mathrm{v}(J_{G})=1$.\par

Conversely, let $G=\mathrm{cone}(v,H)$ for some non-complete graph $H$. By Proposition \ref{complete}, we get $J_{G}:x_{v}=J_{G_{v}}$. Since $G=\mathrm{cone}(v,H)$, $G_{v}$ is complete and $J_{G_{v}}=P_{\emptyset}(G)$. Thus, $\mathrm{v}(J_{G})\leq \mathrm{deg}(x_{v})=1$. Now $H$ being non-complete, $G$ is also non-complete, and so, $J_{G}$ is not a prime ideal. Therefore $\mathrm{v}(J_{G})\geq 1$, which gives $\mathrm{v}(J_{G})=1$.
\end{proof}
\medskip

\section{The $\mathrm{v}$-Number and Castelnuovo-Mumford Regularity}\label{secvreg}

In this section, we try to establish a relation between the $\mathrm{v}$-number and Castelnuovo-Mumford regularity of binomial edge ideals. For certain classes of graphs, we show that the $\mathrm{v}$-number is less than or equal to the regularity of binomial edge ideals. Our main technique is to observe the induced matchings of the hypergraphs corresponding to the initial ideals of binomial edge ideals.

\begin{definition}{\rm
A \textit{simple hypergraph} $\mathcal{H}$ is a pair $(V(\mathcal{H}),E(\mathcal{H}))$, where $V(\mathcal{H})$ is a set of finite elements, known as the \textit{vertex set} of $\mathcal{H}$ and $E(\mathcal{H})$ is a collection of subsets of $V(\mathcal{H})$ such that no two elements of $E(\mathcal{H})$ contain each other, called the \textit{edge set} of $\mathcal{H}$. Elements of $V(\mathcal{H})$ are called vertices of $\mathcal{H}$ and elements of $E(\mathcal{H})$ are called edges of $\mathcal{H}$.
}
\end{definition}

Let $\mathcal{H}$ be a simple hypergraph on the vertex set $V(\mathcal{H})=\{x_{1},\ldots,x_{n}\}$. For $A\subseteq V(\mathcal{C})$, we consider $X_{A}:=\prod_{x_{i}\in A} x_{i}$ as a square-free monomial in the polynomial ring $R=K[x_{1},\ldots,x_{n}]$ over a field $K$. The \textit{edge ideal} of the hypergraph $\mathcal{H}$, denoted by $I(\mathcal{H})$, is an ideal of $R$ defined by 
$$I(\mathcal{H})=\big<X_{e}\mid e\in E(\mathcal{H})\big>.$$
In this sense, the family of square-free monomial ideals are in one to one correspondence with the family of simple hypergraphs. For a square-free monomial ideal $I$ of $R$, we denote by $\mathcal{H}(I)$ the Corresponding simple hypergraph.

\begin{definition}{\rm
An \textit{induced matching} in a simple hypergraph $\mathcal{H}$ is a set of pairwise disjoint edges $e_{1},\ldots,e_{r}$ such that the only edges of $\mathcal{H}$ contained in $\bigcup_{i=1}^{r} e_{i}$ are $e_{1},\ldots,e_{r}$.
}
\end{definition}

\begin{proposition}[{\cite[Corollary 3.9]{mv12}}]\label{propim}
Let $\mathcal{H}$ be a simple graph and $M=\{e_{1},\ldots, e_{r}\}$ be an induced matching in $\mathcal{H}$. Then $\sum_{i=1}^{r}(\vert e_{i}\vert -1)=(\sum_{i=1}^{r}\vert e_{i}\vert)-r\leq \mathrm{reg}(R/I(\mathcal{H}))$.
\end{proposition}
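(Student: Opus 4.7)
The plan is to pass to the induced subhypergraph on the vertex set $W := \bigsqcup_{i=1}^{r} e_{i}$, compute the regularity of its edge ideal directly, and then argue that regularity can only grow when re-enlarging from the induced subhypergraph back to $\mathcal{H}$. The induced matching hypothesis is used crucially (and only) at the first step.

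First I would record that, because $M$ is an induced matching, the only edges of $\mathcal{H}$ contained in $W$ are $e_{1},\ldots,e_{r}$ themselves. Writing $R_{W}=K[x_{j}\mid x_{j}\in W]$, this identifies the edge ideal of the induced subhypergraph as
\[
I(\mathcal{H}[W])=\bigl\langle X_{e_{1}},\ldots,X_{e_{r}}\bigr\rangle \subseteq R_{W},
\]
where the generators $X_{e_{i}}$ have pairwise disjoint supports since the $e_{i}$ are pairwise disjoint.

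Next I would compute $\mathrm{reg}(R_{W}/I(\mathcal{H}[W]))$ directly. The disjointness of supports lets us split $R_{W}\cong \bigotimes_{i=1}^{r}K[x_{j}\mid x_{j}\in e_{i}]$ and correspondingly
\[
R_{W}/I(\mathcal{H}[W])\;\cong\; \bigotimes_{i=1}^{r} K[x_{j}\mid x_{j}\in e_{i}]/\bigl(X_{e_{i}}\bigr).
\]
Each tensor factor has a Koszul-type resolution $0\to R_{e_{i}}(-|e_{i}|)\to R_{e_{i}}\to R_{e_{i}}/(X_{e_{i}})\to 0$, giving regularity exactly $|e_{i}|-1$. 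A standard K\"unneth/additivity argument for regularity over tensor products of graded $K$-algebras then yields $\mathrm{reg}(R_{W}/I(\mathcal{H}[W]))=\sum_{i=1}^{r}(|e_{i}|-1)$.

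The remaining step, which I expect to be the only real obstacle, is establishing the restriction inequality $\mathrm{reg}(R_{W}/I(\mathcal{H}[W]))\leq \mathrm{reg}(R/I(\mathcal{H}))$. My preferred route is Hochster's formula: if $\Delta$ denotes the Stanley–Reisner complex of $I(\mathcal{H})$, then the induced subcomplex $\Delta_{W}$ on $W$ is precisely the Stanley–Reisner complex of $I(\mathcal{H}[W])$. Hochster's formula expresses
\[
\beta_{i,j}\bigl(R/I(\mathcal{H})\bigr)=\sum_{\substack{U\subseteq V(\mathcal{H})\\ |U|=j}}\dim_{K}\widetilde{H}_{j-i-1}(\Delta_{U};K),
\]
and similarly for $R_{W}/I(\mathcal{H}[W])$ but with $U$ ranging only over subsets of $W$. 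Since every $U\subseteq W$ is also a subset of $V(\mathcal{H})$, every graded Betti number of $R_{W}/I(\mathcal{H}[W])$ is bounded by the corresponding Betti number of $R/I(\mathcal{H})$, and in particular $\mathrm{reg}(R_{W}/I(\mathcal{H}[W]))\leq \mathrm{reg}(R/I(\mathcal{H}))$. Chaining this with the computation of the previous paragraph produces the desired inequality $\sum_{i=1}^{r}(|e_{i}|-1)\leq \mathrm{reg}(R/I(\mathcal{H}))$.
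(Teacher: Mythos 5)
Your proof is correct. Note that the paper itself gives no argument for this proposition: it is quoted verbatim from Morey--Villarreal \cite[Corollary~3.9]{mv12}, so there is nothing internal to compare against. Your two ingredients --- the exact computation $\mathrm{reg}(R_{W}/(X_{e_{1}},\ldots,X_{e_{r}}))=\sum_{i}(|e_{i}|-1)$ for monomials with pairwise disjoint supports, and the restriction inequality for induced subcomplexes via Hochster's formula --- are precisely the standard route by which the cited result is established, and the induced matching hypothesis is used exactly where it must be, to identify $I(\mathcal{H}[W])$ with $\langle X_{e_{1}},\ldots,X_{e_{r}}\rangle$.
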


\begin{lemma}\label{lemnopath}
Let $\{v_{1},\ldots,v_{k}\}$ forms a minimal completion set of a connected graph $G$ such that $v_{i}\in \mathcal{N}_{G}(v_{1})\cup \cdots\cup \mathcal{N}_{G}(v_{i-1})$ for all $2\leq i\leq k$. Then for each $2\leq i\leq k$, there exists $u_{i}\in \mathcal{N}_{G}(v_{i})$ such that $u_{i}\not\in \mathcal{N}_{G}(v_{1})\cup \cdots\cup \mathcal{N}_{G}(v_{i-1})$ and there is no path from $u_{i}$ to $v_{1}$ in $G[\{v_{1},\ldots,\widehat{v_{i}},\ldots,v_{k},u_{i}\}]$.
\end{lemma}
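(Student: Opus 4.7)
The plan is to construct $u_i$ by analyzing the structure of $G$ around $v_i$, with a case split on whether $v_i$ is a cut vertex of $G$.

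First I would establish a preliminary observation: by the ordering property together with induction on $j$, the vertices $v_1, \ldots, v_{i-1}$ all lie in a single connected component $\mathcal{C}$ of the induced subgraph $G[W \setminus \{v_i\}]$. Indeed, for $1 < j < i$, the ordering property gives $v_j \in \mathcal{N}_G(v_l)$ for some $l < j$, so the edge $\{v_l, v_j\}$ lies in $G[W \setminus \{v_i\}]$ (both endpoints are in $W \setminus \{v_i\}$), placing $v_j$ in the same component as $v_l$, which is $\mathcal{C}$ by induction. Using this, conditions (a) and (b) together on a candidate $u_i \in \mathcal{N}_G(v_i)$ become equivalent to the single requirement $u_i \in \mathcal{N}_G(v_i) \setminus (\mathcal{C} \cup \mathcal{N}_G(\mathcal{C}))$: that is, $u_i$ should be a $G$-neighbor of $v_i$ which neither lies in $\mathcal{C}$ nor is $G$-adjacent to any vertex of $\mathcal{C}$. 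So the task reduces to showing that this set is non-empty.

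The plan is then to split into two cases. If $v_i$ is a cut vertex of $G$, then $G \setminus \{v_i\}$ has multiple connected components, and connectedness of $G$ forces $v_i$ to have a $G$-neighbor in each component; I would take $u_i$ to be any such $G$-neighbor lying in a component different from the one containing $v_1$, which immediately gives the required properties. If $v_i$ is not a cut vertex (so $G \setminus \{v_i\}$ is connected), I would argue by contradiction: assume $\mathcal{N}_G(v_i) \subseteq \mathcal{C} \cup \mathcal{N}_G(\mathcal{C})$ and deduce that $W \setminus \{v_i\}$ is itself a completion set of $G$, contradicting minimality. The strategy is to first observe that $\mathcal{C}$ is connected and fully contained in $W \setminus \{v_i\}$, so iterating the pivots on $\mathcal{C}$ (a subset of the pivots defining $G_{W \setminus \{v_i\}}$) already makes $\mathcal{C}$ a clique in $G_{W \setminus \{v_i\}}$; the hypothesis is then used to show that $\mathcal{N}_{G_{W \setminus \{v_i\}}}(v_i)$ is a clique in $G_{W \setminus \{v_i\}}$, which renders the $v_i$-pivot redundant and gives $G_W = G_{W \setminus \{v_i\}}$, hence $W \setminus \{v_i\}$ is a completion set.

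The main obstacle is rigorously carrying out the last step of the second case. One has to track how the pivoting operations in $W \setminus \{v_i\}$ propagate the clique structure of $\mathcal{C}$ through its closed neighborhood $\mathcal{C} \cup \mathcal{N}_G(\mathcal{C})$, combine this with the already-established fact that $v_i$ is adjacent to every other vertex in $G_{W \setminus \{v_i\}}$, and conclude that $V(G) \setminus \{v_i\}$ forms a clique in $G_{W \setminus \{v_i\}}$. This requires a careful inductive analysis of the effect of repeated pivots on a connected subgraph, and using the hypothesis $\mathcal{N}_G(v_i) \subseteq \mathcal{C} \cup \mathcal{N}_G(\mathcal{C})$ to link every vertex outside $\mathcal{C}$ into the clique via an adjacent vertex inside $\mathcal{C}$.
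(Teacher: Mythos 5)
Your reduction is correct and is in fact a cleaner way to organize what the paper does: writing $W=\{v_1,\ldots,v_k\}$ and letting $\mathcal{C}$ be the component of $G[W\setminus\{v_i\}]$ containing $v_1,\ldots,v_{i-1}$ (which exists by the ordering hypothesis, as you argue), conditions (a) and (b) are together equivalent to $u_i\notin V(\mathcal{C})\cup\mathcal{N}_G(V(\mathcal{C}))$, and your cut-vertex case is handled correctly. That case is, however, redundant: if $v_i$ is a cut vertex, then $V(\mathcal{C})\cup\mathcal{N}_G(V(\mathcal{C}))$ minus $\{v_i\}$ lies entirely in the component of $G\setminus\{v_i\}$ containing $v_1$, so $\mathcal{N}_G(v_i)\not\subseteq V(\mathcal{C})\cup\mathcal{N}_G(V(\mathcal{C}))$ automatically and the contradiction argument of your second case covers everything.

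The genuine gap is in the last step of the second case. Knowing that $V(\mathcal{C})\cup\mathcal{N}_G(V(\mathcal{C}))$ becomes a clique in $G_{W\setminus\{v_i\}}$ and that $v_i$ is adjacent to all other vertices there does not by itself show that $\mathcal{N}_{G_{W\setminus\{v_i\}}}(v_i)=V(G)\setminus\{v_i\}$ is a clique: that assertion is literally the desired conclusion ($G_{W\setminus\{v_i\}}$ complete), so invoking ``redundancy of the $v_i$-pivot'' at that point is circular, and your proposed mechanism of linking every vertex outside $\mathcal{C}$ into the clique via a neighbour inside $\mathcal{C}$ breaks down for vertices having no neighbour in $V(\mathcal{C})\cup\mathcal{N}_G(V(\mathcal{C}))$, about which the hypothesis $\mathcal{N}_G(v_i)\subseteq V(\mathcal{C})\cup\mathcal{N}_G(V(\mathcal{C}))$ says nothing. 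The step can be closed by a comparison argument instead of a direct one: by Proposition \ref{Gij} the pivots commute, so $G_W=(G_{v_i})_{W\setminus\{v_i\}}$; the pivot at $v_i$, performed first, only adds edges between vertices of $\mathcal{N}_G(v_i)\subseteq V(\mathcal{C})\cup\mathcal{N}_G(V(\mathcal{C}))$, and since pivoting along the connected set $\mathcal{C}\subseteq W\setminus\{v_i\}$ already turns $V(\mathcal{C})\cup\mathcal{N}_G(V(\mathcal{C}))$ into a clique (an easy induction on $|V(\mathcal{C})|$), one gets $G_{v_i}\subseteq G_{\mathcal{C}}$ and hence, by monotonicity of pivoting, $G_W\subseteq (G_{\mathcal{C}})_{W\setminus\{v_i\}}=G_{W\setminus\{v_i\}}\subseteq G_W$; thus $G_{W\setminus\{v_i\}}=G_W$ is complete, the desired contradiction. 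For comparison, the paper's proof runs the same contradiction but phrases the failure condition via paths from the leftover neighbours $u_{i_1},\ldots,u_{i_r}$ to $v_1$, absorbs them into $\mathcal{N}(v_{i'})$ for a maximal later index $i'$, and likewise leaves the final completeness propagation informal; your component-based reformulation is tidier, but the crux step needs the commutativity argument above rather than the local linking you describe.
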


\begin{proof}
If $\mathcal{N}_{G}(v_{i})\subseteq \mathcal{N}_{G}(v_{1})\cup \cdots\cup \mathcal{N}_{G}(v_{i-1})$, then it is clear that $G_{v_{1}\cdots \widehat{v_{i}}\cdots v_{k}}$ is complete, and this gives a contradiction to the fact that $\{v_{1},\ldots,v_{k}\}$ is a minimal completion set of $G$. Therefore, $\mathcal{N}_{G}(v_{i})\not\subseteq \mathcal{N}_{G}(v_{1})\cup \cdots\cup \mathcal{N}_{G}(v_{i-1})$. Let $\mathcal{N}_{G}(v_{i}) \setminus (\mathcal{N}_{G}(v_{1})\cup \cdots\cup \mathcal{N}_{G}(v_{i-1}))=\{u_{i_{1}},\ldots,u_{i_{r}}\}$. Suppose for each $1\leq j\leq r$, there is a path from $u_{i_{j}}$ to $v_{1}$ in $G[\{v_{1},\ldots,\widehat{v_{i}},\ldots,v_{k},u_{i_{j}}\}]$. Then $u_{i_{j}}\in \mathcal{N}_{G}(v_{s_{j}})$ for some $s_{j}\in \{i+1,\ldots,k\}$ and there is a path from $v_{s_{j}}$ to $v_{1}$ in $G[\{v_{1},\ldots,\widehat{v_{i}},\ldots,v_{k}\}]$. Let $i^{\prime}=\mathrm{max}\{s_{1},\ldots,s_{r}\}$ ($s_i$'s need not be distinct). Then $\{u_{i_{1}},\ldots,u_{i_{r}}\}\subseteq \mathcal{N}_{G_{v_{1}\cdots\widehat{v_{i}}\cdots v_{i^{\prime}}}}(v_{i^{\prime}})$. Since there exists a path from $v_{i^{\prime}}$ to $v_{1}$ in $G[\{v_{1},\ldots,\widehat{v_{i}},\ldots,v_{k}\}]$, we have $G_{v_{1}\cdots \widehat{v_{i}}\cdots v_{k}}$ is complete, which contradicts the minimality of $\{v_{1},\ldots,v_{k}\}$. This completes the proof.
\end{proof}

\begin{theorem}\label{vregchordal}
Let $G$ be a chordal graph. Then $\mathrm{max\mbox{-}comp}(G)\leq \mathrm{reg}(S/J_{G})$. In particular, we have $\mathrm{v}(J_{G})\leq \mathrm{v}_{\emptyset}(J_{G})\leq \mathrm{max\mbox{-}comp}(G)\leq \mathrm{reg}(S/J_{G})$.
\end{theorem}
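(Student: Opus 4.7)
The theorem is the chain
\[
\mathrm{v}(J_G)\;\le\;\mathrm{v}_\emptyset(J_G)\;\le\;\mathrm{max\mbox{-}comp}(G)\;\le\;\mathrm{reg}(S/J_G).
\]
The first inequality holds by the definition of the $\mathrm{v}$-number, and the second follows from Theorem~\ref{vphi} together with the trivial $\mathrm{min\mbox{-}comp}(G)\le\mathrm{max\mbox{-}comp}(G)$. All of the content therefore lies in the last inequality, which I would attack by degenerating to the initial ideal and then extracting a sufficiently large induced matching in the associated hypergraph.

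Remark~\ref{remgrob} exhibits the minimal generators of $\mathrm{in}_<(J_G)$ as the square-free monomials $u_\pi x_i y_j$, so $\mathrm{in}_<(J_G)$ is square-free for every graph. Conca--Varbaro's \cite[Corollary~2.7]{cv20} then yields $\mathrm{reg}(S/J_G)=\mathrm{reg}(S/\mathrm{in}_<(J_G))$. Letting $\mathcal H$ denote the simple hypergraph associated with $\mathrm{in}_<(J_G)$, Proposition~\ref{propim} reduces the problem to producing an induced matching $\{e_1,\ldots,e_r\}$ of $\mathcal H$ satisfying $\sum_{i=1}^r(|e_i|-1)\ge\mathrm{max\mbox{-}comp}(G)$. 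By Corollary~\ref{v-addbinom} and the corresponding additivity of $\mathrm{max\mbox{-}comp}$ and of regularity under disjoint unions of graphs, I may assume $G$ is connected.

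Fix a minimal completion set $W=\{v_1,\ldots,v_k\}$ of $G$ realising $k=\mathrm{max\mbox{-}comp}(G)$. The case $k=0$ is trivial, so assume $k\ge 1$. Since $G$ is connected, a breadth-first relabelling of $W$ rooted at $v_1$ arranges that $v_i\in\mathcal N_G(v_1)\cup\cdots\cup\mathcal N_G(v_{i-1})$ for every $i\ge2$, which is exactly the hypothesis of Lemma~\ref{lemnopath}. That lemma then supplies, for each $i\ge2$, a ``private neighbour'' $u_i\in\mathcal N_G(v_i)$ lying outside $\bigcup_{j<i}\mathcal N_G(v_j)$ and disconnected from $v_1$ in $G[\{v_1,\ldots,\widehat{v_i},\ldots,v_k,u_i\}]$; I also pick $u_1\in\mathcal N_G(v_1)\setminus\{u_2,\ldots,u_k\}$. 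I would then propose the matching
\[
e_i\;:=\;\bigl\{\,x_{\min(v_i,u_i)},\ y_{\max(v_i,u_i)}\,\bigr\}\qquad (1\le i\le k),
\]
each $e_i$ being the degree-two generator of $\mathrm{in}_<(J_G)$ coming from the edge $\{v_i,u_i\}\in E(G)$, so that $\sum_{i=1}^k(|e_i|-1)=k=\mathrm{max\mbox{-}comp}(G)$.

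The principal difficulty of the proof, and the only place where chordality is genuinely used, is verifying the \emph{induced}-matching property: no other generator $u_\pi x_p y_q$ of $\mathrm{in}_<(J_G)$ may have its support inside $\bigcup_i\mathrm{supp}(e_i)$. Pairwise variable-disjointness of the $e_i$ follows directly from the private-neighbour property of Lemma~\ref{lemnopath}; but ruling out rogue admissible paths is more delicate. Any such path is an induced path of $G$ supported on $\{v_1,\ldots,v_k,u_1,\ldots,u_k\}$, and in a chordal graph the absence of induced cycles of length $\ge4$ forces any induced path of length $\ge2$ across this set to create a chord that contradicts either the private-neighbour property of some $u_i$ or the no-path-to-$v_1$ clause of Lemma~\ref{lemnopath}. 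Once this verification is in hand, Proposition~\ref{propim} gives $\mathrm{reg}(S/\mathrm{in}_<(J_G))\ge k=\mathrm{max\mbox{-}comp}(G)$, which, combined with the Conca--Varbaro equality and the already-established $\mathrm{v}(J_G)\le\mathrm{v}_\emptyset(J_G)\le\mathrm{max\mbox{-}comp}(G)$, completes the proof.
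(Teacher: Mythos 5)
Your overall strategy coincides with the paper's: reduce everything to the last inequality, pass to $\mathrm{in}_<(J_G)$ via Conca--Varbaro, and exhibit an induced matching of size $k=\mathrm{max\mbox{-}comp}(G)$ in $\mathcal H(\mathrm{in}_<(J_G))$ built from pairs $\{v_i,u_i\}$ supplied by Lemma \ref{lemnopath}. But the two steps that carry the actual content are missing. First, your private neighbours are not private enough. Lemma \ref{lemnopath} only guarantees $u_i\notin\mathcal N_G(v_1)\cup\cdots\cup\mathcal N_G(v_{i-1})$; it says nothing about adjacency of $u_i$ to $v_j$ for $j>i$, and your $u_1$ is only required to avoid the set $\{u_2,\ldots,u_k\}$. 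If, say, $u_1\in\mathcal N_G(v_2)$ with $v_1<u_1$, $v_2<u_2$ and $v_2<u_1$, then $x_{v_2}y_{u_1}$ is a degree-two generator of $\mathrm{in}_<(J_G)$ whose support lies in $e_1\cup e_2$, so your $M$ is not an induced matching. This is why the paper splits each step into two cases: either some $u_i\in\mathcal N_G(v_i)$ avoids \emph{all} the other closed neighbourhoods $\mathcal N_G[v_j]$, $j\neq i$, or no such vertex exists, in which case one argues that $u_i$ can be taken to be one of the later $v_j$'s and reorders the completion set accordingly. Neither the strengthened choice nor the fallback appears in your argument.

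Second, you never choose the labelling of $V(G)$, and the hypergraph $\mathcal H$ genuinely depends on it: which paths are admissible, hence which higher-degree generators $u_\pi x_py_q$ exist, is a function of the labels. Ruling out a rogue generator with support inside $\bigcup_ie_i$ is exactly where this matters. The paper's verification hinges on assigning $v_i$ and $u_i$ the consecutive labels $t_i$ and $t_i+1$, so that a label $t_p+1\notin\{t_1,\ldots,t_k\}$ is adjacent to only $t_p$ among the chosen labels; combined with chordality and the no-path clause of Lemma \ref{lemnopath}, this forces any admissible path supported in $\bigcup_ie_i$ to collapse. Your final paragraph asserts that chordality produces the needed contradiction but gives no argument, and without the controlled labelling the assertion is not available to you. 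So the skeleton is the right one, but the proof of the induced-matching claim --- the only nontrivial point --- is not actually supplied.
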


\begin{proof}
It is enough to assume $G$ is connected. Let $V=\{v_{1},\ldots, v_{k}\}$ be a minimal completion set of $G$. Since $G$ is connected, after a suitable relabelling of vertices in $V$ we get an order $v_{1},\ldots,v_{k}$ such that 
\begin{align}\label{eq1}
v_{i}\in \mathcal{N}_{G_{v_{1}\cdots v_{i-1}}}(v_{i-1})=\mathcal{N}_{G}(v_{1})\cup \cdots\cup \mathcal{N}_{G}(v_{i-1})
\end{align}
for each $i=2,\ldots,k$. From the Gr\"{o}bner basis of $J_{G}$, it is clear that $\mathrm{in}_{<}(J_{G})$ changes with the labelling of $V(G)$. Our aim is to find a labelling of $G$ for which there exists an induced matching $M$ in $\mathcal{H}=\mathcal{H}(\mathrm{in}_{<}(J_{G}))$ such that $\sum_{e\in M}(\vert e\vert -1)\geq k$. We will find such suitable labelling of $V(G)$ and corresponding induced matching of $\mathcal{H}$ in a particular algorithmic technique. Let us start.
\medskip

\noindent \textbf{Step-1:} (\textbf{Case-1A}) If there exists $u_{1}\in \mathcal{N}_{G}(v_{1})$ such that $u_{1}\not\in \mathcal{N}_{G}[v_{2}]\cup \cdots\cup \mathcal{N}_{G}[v_{k}]$, then label $v_{1}=t_{1}=1$ and $u_{1}=t_{1}+1=2$. In this case, consider the set $M=\{e_{1}\}$, where $e_{1}=\{x_{1},y_{2}\}$.\par

(\textbf{Case-1B}) If $\mathcal{N}_{G}(v_{1})\subseteq \mathcal{N}_{G}[v_{2}]\cup \cdots\cup \mathcal{N}_{G}[v_{k}]$, then take $u_{1}$ as $v_{2}$, and label $v_{1}=t_{1}=1$ and $u_{1}=v_{2}=t_{2}=t_{1}+1=2$. In this case, consider the set $M=\{e_{1}\}$, where $e_{1}=\{x_{t_{1}},y_{t_{1}+1}\}=\{x_{1},y_{2}\}$.
\medskip

\noindent \textbf{Step-2:} By Lemma \ref{lemnopath}, there exists $u_{2}\in \mathcal{N}_{G}(v_{2})$ such that $u_{2}\not\in \mathcal{N}_{G}(v_{1})$ and there is no path from $u_{2}$ to $v_{1}$ in $G[\{v_{1},\widehat{v_{2}},\ldots,v_{k},u_{2}\}]$.\par 

(\textbf{Case-2A}) If there exists $u_{2}\in\mathcal{N}_{G}(v_{2})$ such that $u_{2}\not\in \mathcal{N}_{G}[v_{1}]\cup \widehat{\mathcal{N}_{G}[v_{2}]}\cup\cdots \cup \mathcal{N}_{G}[v_{k}]$, then we choose such $u_{2}$. In this situation, we label the vertices in the following fashion:
If $v_{2}\neq u_{1}$, then label $v_{2}=t_{2}=t_{1}+2=3$ and label $u_{2}=t_{2}+1=4$. In this case, update $M$ as $M=\{e_{1},e_{2}\}$, where $e_{2}=\{x_{t_{2}}, y_{t_{2}+1}\}=\{x_{3},y_{4}\}$. If $v_{2}=u_{1}$, then we have from the previous case $u_{1}=v_{2}=t_{2}=t_{1}+1$ and label $u_{2}=t_{2}+1$. In this case, update $M$ as $M=\{e_{1},e_{2}\}$, where $e_{2}=\{x_{t_{2}},y_{t_{2}+1}\}$.\par

(\textbf{Case-2B}) Suppose $\mathcal{N}_{G}(v_{2})\subseteq \mathcal{N}_{G}[v_{1}]\cup \widehat{\mathcal{N}_{G}[v_{2}]}\cup\cdots \cup \mathcal{N}_{G}[v_{k}]$. Then there exists an $u_{2}\in \{v_{3},\ldots, v_{k}\}$ satisfying the condition of Lemma \ref{lemnopath}, otherwise $\{v_{1},\widehat{v_{2}},\ldots,v_{k}\}$ will be a minimal completion set of $G$. Let $u_{2}=v_{2+j}$ such that $j$ is the smallest. In this case, we will relabel $V$ as follows:
\begin{align*}
\text{Label}\,\,  v_{2+j}\,\, &\text{as}\,\, v_{3},\\
 v_{3}\,\, &\text{as}\,\, v_{4},\\
 \vdots\\
 v_{2+j-1}\,\, &\text{as}\,\, v_{2+j},
\end{align*}
and others will remain the same. Note that the new ordering of vertices in $V$ also satisfies the property \eqref{eq1}. So we can continue with this ordering. Now, label 
\begin{align*}
v_{2}=t_{2}=
\begin{cases}
t_{1}+1\,\, \text{if}\,\, u_{1}=v_{2}\,\, \text{in step-1}\\
t_{1}+2\,\, \text{if}\,\, u_{1}\neq v_{2}\,\, \text{in step-1}
\end{cases}
\end{align*}
\noindent and $u_{2}=v_{3}=t_{3}=t_{2}+1$. In this case, we update the set $M$ as $M=\{e_{1}, e_{2}\}$, where $e_{2}=\{x_{t_{2}},y_{t_{2}+1}\}$.
\medskip

\noindent Continue this process with the following $i$-th step:
\medskip

\noindent \textbf{Step-i:} By Lemma \ref{lemnopath}, there exists $u_{i}\in \mathcal{N}_{G}(v_{i})$ such that $u_{i}\not\in \mathcal{N}_{G}(v_{1})\cup\cdots\cup \mathcal{N}_{G}(v_{i-1})$ and there is no path from $u_{i}$ to $v_{1}$ in $G[\{v_{1},\ldots,\widehat{v_{i}},\ldots,v_{k},u_{i}\}]$.\par 

(\textbf{Case-iA}) If there exists $u_{i}\in\mathcal{N}_{G}(v_{i})$ such that $u_{i}\not\in \mathcal{N}_{G}[v_{1}]\cup \cdots\cup \widehat{\mathcal{N}_{G}[v_{i}]}\cup\cdots \cup \mathcal{N}_{G}[v_{k}]$, then we choose such $u_{i}$. In this situation, we label the vertices in the following fashion:
If $v_{i}\neq u_{i-1}$, then label $v_{i}=t_{i}=t_{i-1}+2$ and label $u_{i}=t_{i}+1$. In this case, update $M$ as $M\cup\{e_{i}\}$, where $e_{i}=\{x_{t_{i}}, y_{t_{i}+1}\}$. Now if $v_{i}=u_{i-1}$, then we have from the previous case $u_{i-1}=v_{i}=t_{i}=t_{i-1}+1$ and label $u_{i}=t_{i}+1$. In this case, update $M$ as $M\cup \{e_{i}\}$, where $e_{i}=\{x_{t_{i}},y_{t_{i}+1}\}$.\par 

(\textbf{Case-iB}) Suppose $\mathcal{N}_{G}(v_{i})\subseteq \mathcal{N}_{G}[v_{1}]\cup \cdots\cup\widehat{\mathcal{N}_{G}[v_{i}]}\cup\cdots \cup \mathcal{N}_{G}[v_{k}]$. Then there exists an $u_{i}\in \{v_{i+1},\ldots, v_{k}\}$ satisfying the condition of Lemma \ref{lemnopath}, otherwise $\{v_{1},\ldots,\widehat{v_{i}},\ldots,v_{k}\}$ will be a minimal completion set of $G$. Let $u_{i}=v_{i+j}$ such that $j$ is the smallest. In this case, we will relabel $V$ as follows:
\begin{align*}
\text{Label}\,\,  v_{i+j}\,\, &\text{as}\,\, v_{i+1},\\
 v_{i+1}\,\, &\text{as}\,\, v_{i+2},\\
 \vdots\\
 v_{i+j-1}\,\, &\text{as}\,\, v_{i+j},
\end{align*}
and others will remain the same. Note that the new ordering of vertices in $V$ also satisfies the property \eqref{eq1}. So we can continue with this ordering. Now, label 
\begin{align*}
v_{i}=t_{i}=
\begin{cases}
t_{i-1}+1\,\, \text{if}\,\, u_{i-1}=v_{i}\,\, \text{in step-(i-1)}\\
t_{i-1}+2\,\, \text{if}\,\, u_{i-1}\neq v_{i}\,\, \text{in step-(i-1)}
\end{cases}
\end{align*}
\noindent and $u_{i}=v_{i+1}=t_{i+1}=t_{i}+1$. In this case, we update the set $M$ as $M\cup \{e_{i}\}$, where $e_{i}=\{x_{t_{i}},y_{t_{i}+1}\}$.
\medskip

\noindent After completing $k$ steps, we get a set $M$ consisting of $k$ edges $e_{1},\ldots,e_{k}$ of $\mathcal{H}$, where $e_{i}=\{x_{t_{i}},y_{t_{i}+1}\}$, such that 
$$\sum_{i=1}^{k} (\vert e_{i}\vert -1)=k.$$

\noindent \textbf{Claim:} The set $M$ forms an induced matching in $\mathcal{H}$.\par
\noindent \textit{Proof of claim.} Let $\mathcal{S}=\bigcup_{e\in M} e$. Then $\mathcal{S}=\{x_{t_{1}},\ldots, x_{t_{k}}, y_{t_{1}+1},\ldots,y_{t_{k}+1}\}$. By our choice and labelling of vertices, it is clear that no two elements of $M$ intersect each other. Now we will show that the only edges of $\mathcal{H}$ contained in $\mathcal{S}$ are the edges that belong to $M$. Suppose $\{x_{t_{i}},y_{t_{j}+1}\}\in E(\mathcal{H})$ for some $i\neq j$ and $x_{t_{i}},y_{t_{j}+1}\in \mathcal{S}$. Then $t_{j}+1>t_{i}+1$ and $\{t_{i},t_{j}+1\}\in E(G)$. We have chosen $u_{j}$ such that $u_{j}\not\in \mathcal{N}_{G}(v_{1})\cup\cdots\cup \mathcal{N}_{G}(v_{j-1})$ and so, $t_{j}+1\not\in \mathcal{N}_{G}(t_{i})$ as $t_{i}< t_{j}$, which is a contradiction. Thus, $\{x_{t_{i}},y_{t_{j}+1}\}\not\in E(\mathcal{H})$ when $i\neq j$ and $x_{t_{i}},y_{t_{j}+1}\in \mathcal{S}$. Hence the only edges of $\mathcal{H}$ with cardinality two contained in $\mathcal{S}$ are the edges that belong to $M$. Suppose $e\in E(\mathcal{H})$ such that $e\not \in M$ and $e\subseteq \mathcal{S}$. Then $\vert e\vert>2$. Corresponding to $e$ there exists an admissible path $\pi:t_{i}=\alpha_{0},\ldots,\alpha_{l}$ in $G$ such that $\mathrm{supp}(u_{\pi}x_{t_{i}}y_{\alpha_{l}})\subseteq \mathcal{S}$. If one of $\alpha_{1},\ldots,\alpha_{l}$ (say $\alpha_{r}$) is $t_{p}+1$ such that $t_{p}+1\not\in\{t_{1},\ldots,t_{k}\}$, then either $\alpha_{r-1}$ or $\alpha_{r+1}$ can not belong to $\{t_{1},\ldots,t_{k}\}$ by our choice of $u_{i}$'s. Because, if $t_{p}+1\not\in \{t_{1},\ldots,t_{k}\}$, then $t_{p}+1$ is adjacent to only $t_{p}$ among $\{t_{1},\ldots,t_{k}\}$. Suppose $\alpha_{r-1}\not\in \{t_{1},\ldots,t_{k}\}$ and $\alpha_{r-1}=t_{q}+1$ for some $t_{q}\in \{t_{1},\ldots,t_{k}\}$. In this situation, we will get an induced cycle of length greater than $3$ containing the vertices $\{t_{p},t_{p}+1,t_{q}+1,t_{q}\}$ and some of $\{t_{1},\ldots, t_{k}\}$, which is a contradiction to the fact that $G$ is chordal. Similarly, we will get a contradiction if $\alpha_{r+1}\not\in \{t_{1},\ldots,t_{k}\}$. Therefore, we should have $\{\alpha_{0},\ldots,\alpha_{l}\}\subseteq \{t_{1},\ldots,t_{k}\}$. Now $\alpha_{l}=u_{j}$ for some $j$. Then $v_{j}\not\in \{\alpha_{0},\ldots,\alpha_{l}\}$ as $t_{i}<$ label of $v_{j}<$ label of $u_{j}$ as per our choice labelling. Thus, there will be a path from $u_{j}$ to $v_{1}$ in $G[\{v_{1},\ldots,\widehat{v_{j}},\ldots,v_{k}, u_{j}\}]$, which gives a contradiction due to Lemma \ref{lemnopath}. Hence, $M$ forms an induced matching in $\mathcal{H}$.
\medskip

By \cite[Corollary 2.7]{cv20}, we have $\mathrm{reg}(S/J_{G})=\mathrm{reg}(S/\mathrm{in}_{<}(J_{G}))$. Again by Proposition \ref{propim}, $k\leq \mathrm{reg}(R/\mathrm{in}_{<}(J_{G}))$ as $M$ is an induced matching in $\mathcal{H}$ with $\sum_{e\in M} (\vert e\vert-1)=k$. The completion set $V$ is chosen arbitrarily and hence, $\mathrm{max\mbox{-}comp}(G)\leq \mathrm{reg}(S/J_{G})$.
\end{proof}

\begin{theorem}\label{thmwhisker}
If a graph $G$ has a minimal completion set $\{v_{1},\ldots,v_{k}\}$ such that for each $1\leq i\leq k$ there exists $u_{i}\in \mathcal{N}_{G}(v_{i})$ such that $u_{i}\not\in \mathcal{N}_{G}[v_{1}]\sqcup \cdots\sqcup \widehat{\mathcal{N}_{G}[v_{i}]}\sqcup\cdots\sqcup \mathcal{N}_{G}[v_{k}]$, then $\mathrm{v}_{\emptyset}(J_{G})\leq k\leq \mathrm{reg}(S/J_{G})$.
\end{theorem}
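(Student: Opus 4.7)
The first inequality $\mathrm{v}_{\emptyset}(J_G) \leq k$ follows at once from Theorem \ref{vphi}, since $\{v_1, \ldots, v_k\}$ is a completion set and so $\mathrm{min\mbox{-}comp}(G) \leq k$. For the harder inequality $k \leq \mathrm{reg}(S/J_G)$, the plan is to mimic the strategy of Theorem \ref{vregchordal}: choose a labelling of $V(G)$, construct an induced matching $M$ of cardinality $k$ in the hypergraph $\mathcal{H} := \mathcal{H}(\mathrm{in}_<(J_G))$ with $\sum_{e \in M}(\vert e \vert - 1) = k$, and then apply Proposition \ref{propim} together with the identity $\mathrm{reg}(S/J_G) = \mathrm{reg}(S/\mathrm{in}_<(J_G))$, which holds by \cite[Corollary 2.7]{cv20} because $\mathrm{in}_<(J_G)$ is square-free.

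A preliminary observation is that the private-neighbour hypothesis forces $\{v_1, \ldots, v_k, u_1, \ldots, u_k\}$ to be $2k$ distinct vertices of $G$: if $u_i = u_j$ (respectively $u_i = v_j$) for some $i \neq j$, then $u_i \in \mathcal{N}_G(v_j) \subseteq \mathcal{N}_G[v_j]$ (respectively $u_i = v_j \in \mathcal{N}_G[v_j]$), contradicting the hypothesis. I would therefore label $V(G)$ so that $v_i = i$ and $u_i = k + i$ for $1 \leq i \leq k$, with the remaining vertices receiving labels $2k+1, \ldots, n$. For each $i$, the edge $\{v_i, u_i\} \in E(G)$ is a length-$1$ admissible path from $i$ to $k+i$, so $x_i y_{k+i} \in G(\mathrm{in}_<(J_G))$. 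Define $e_i := \{x_i, y_{k+i}\}$, $M := \{e_1, \ldots, e_k\}$, and $\mathcal{S} := \bigcup_{i=1}^{k} e_i = \{x_1, \ldots, x_k, y_{k+1}, \ldots, y_{2k}\}$.

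The main step is verifying that $M$ is an induced matching in $\mathcal{H}$. Suppose $e = \mathrm{supp}(u_\pi x_a y_b) \subseteq \mathcal{S}$ is an edge of $\mathcal{H}$, coming from an admissible path $\pi : a = \alpha_0, \ldots, \alpha_r = b$ with $a < b$. Then $x_a \in \mathcal{S}$ gives $a \in \{1, \ldots, k\}$ and $y_b \in \mathcal{S}$ gives $b \in \{k+1, \ldots, 2k\}$. For every intermediate vertex $\alpha_l$, either $\alpha_l < a$ or $\alpha_l > b$: in the first case $y_{\alpha_l}$ divides $u_\pi$, so $\alpha_l \geq k+1$ is required for $y_{\alpha_l} \in \mathcal{S}$, contradicting $\alpha_l < a \leq k$; in the second case $x_{\alpha_l}$ divides $u_\pi$, so $\alpha_l \leq k$ is required for $x_{\alpha_l} \in \mathcal{S}$, contradicting $\alpha_l > b \geq k+1$. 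Hence $r = 1$, so $\{a, b\} \in E(G)$. Writing $a = v_s$ and $b = u_t$, the hypothesis $u_t \notin \mathcal{N}_G[v_s]$ whenever $s \neq t$ forces $s = t$, and therefore $e = e_s \in M$.

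The clean split of labels into the two blocks $\{1, \ldots, k\}$ (for the $v_i$'s) and $\{k+1, \ldots, 2k\}$ (for the $u_i$'s) is what makes the induced-matching check essentially automatic here, a notable simplification compared with the chordal case of Theorem \ref{vregchordal}. The main obstacle to anticipate is simply ensuring the labelling is compatible with the hypothesis, which is handled by the distinctness observation above. Concluding via Proposition \ref{propim} and \cite[Corollary 2.7]{cv20}, one obtains $k = \sum_{e \in M}(\vert e \vert - 1) \leq \mathrm{reg}(S/\mathrm{in}_<(J_G)) = \mathrm{reg}(S/J_G)$, as required.
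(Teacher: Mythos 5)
Your proposal is correct and follows essentially the same route as the paper: label $v_i$ as $i$ and $u_i$ as $k+i$, take the induced matching $M=\{\{x_i,y_{k+i}\}\}_{i=1}^{k}$ in $\mathcal{H}(\mathrm{in}_<(J_G))$, and conclude via Proposition \ref{propim}, \cite[Corollary 2.7]{cv20}, and Theorem \ref{vphi}. You merely spell out two details the paper leaves as ``easy to see'' --- the distinctness of the $2k$ vertices and the verification that no other admissible-path generator has support in $\mathcal{S}$ --- and both are argued correctly.
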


\begin{proof}
Let us label the vertex $v_{i}$ as $i$ and the vertex $u_{i}$ as $k+i$ for each $1\leq i\leq k$. The remaining vertices can be labelled arbitrarily. Let $\mathcal{H}$ be the corresponding hypergraph of the $\mathrm{in}_{<}(J_{G})$ with respect to our choice of labelling. Now consider the set $M=\{e_{1},\ldots,e_{k}\}\subseteq E(\mathcal{H})$, where $e_{i}=\{x_{i},y_{k+i}\}\in E(\mathcal{H})$ for each $i=1,\ldots,k$. Looking at the Gr\"{o}bner basis of $J_{G}$, it is easy to see that $M$ forms an induced matching in $\mathcal{H}$ as $k+i\in\mathcal{N}_{G}(i)$, but $k+i\not\in \mathcal{N}_{G}[1]\sqcup \cdots\sqcup \widehat{\mathcal{N}_{G}[i]}\sqcup\cdots\sqcup \mathcal{N}_{G}[k]$. Thus, $\mathrm{reg}(S/I(\mathcal{H}))\geq \sum_{i=1}^{k}(\vert e_{i}\vert-1)=k$ by Proposition \ref{propim}. Hence, by \cite[Corollary 2.7]{cv20} and Theorem \ref{vphi}, we get $\mathrm{v}_{\emptyset}(J_{G})\leq k\leq \mathrm{reg}(S/I(\mathcal{H}))=\mathrm{reg}(S/J_{G})$.
\end{proof}

\begin{definition}{\rm
Let $G$ be a graph with $V(G)=\{v_{1},\ldots,v_{n}\}$. The \textit{whisker graph} of $G$, denoted by $W_{G}$, is the graph attaching $n$ new vertices $\{u_{1},\ldots,u_{n}\}$ to $G$ as follows:
\begin{enumerate}
\item[$\bullet$] $V(W_{G})=\{v_{1},\ldots,v_{n},u_{1},\ldots,u_{n}\},$
\item[$\bullet$] $E(W_{G})=E(G)\cup \{\{v_{i},u_{i}\}\mid i=1,\ldots,n\}.$
\end{enumerate}
}
\end{definition}

\begin{corollary}\label{corwhisker}
Let $G$ be a graph with $V(G)=[n]$. Then $\mathrm{v}_{\emptyset}(W_{G})=n\leq \mathrm{reg}(S/J_{W_{G}})$.
\end{corollary}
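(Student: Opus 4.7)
The plan is to combine Theorem \ref{thmwhisker}, applied to the natural completion set $V_0=\{v_1,\ldots,v_n\}$ of $W_G$, with a direct lower bound $\mathrm{min\mbox{-}comp}(W_G)\geq n$ obtained from the fact that every whisker is a leaf of $W_G$. The overall strategy mirrors the proof of Theorem \ref{thmwhisker}, but here the completion set and the required neighbours are forced by the structure of $W_G$.

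First I would verify that $V_0$ is a minimal completion set of $W_G$. Performing the $w\mapsto(\cdot)_w$-operation successively at the $v_i$ turns each connected component of $G$ together with its attached whiskers into a clique, since every $v_i$ and every $u_i$ eventually becomes adjacent to every other vertex of that component (routine to check by induction, using connectedness of $G$ and the fact that $u_i$ joins $\mathcal{N}(v_i)$ as soon as $v_i$ is processed). For minimality, if any $v_i$ is removed from $V_0$ then the whisker $u_i$ retains only $v_i$ as a neighbour in the resulting graph, whereas $v_i$ retains all of its $G$-neighbours, so the component of $u_i$ cannot be complete.

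Next I would check the extra hypothesis of Theorem \ref{thmwhisker} with $k=n$. For each $i$, take the whisker $u_i$ at $v_i$ as the required neighbour; then $u_i\in \mathcal{N}_{W_G}(v_i)$, and because $\deg_{W_G}(u_i)=1$ we have $\mathcal{N}_{W_G}[u_i]=\{u_i,v_i\}$, so $u_i\notin \mathcal{N}_{W_G}[v_j]$ for every $j\neq i$. Applying Theorem \ref{thmwhisker} immediately yields
\[
\mathrm{v}_{\emptyset}(J_{W_G})\leq n\leq \mathrm{reg}(S/J_{W_G}).
\]

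To upgrade $\mathrm{v}_{\emptyset}(J_{W_G})\leq n$ to equality, I would prove $\mathrm{min\mbox{-}comp}(W_G)\geq n$ and invoke Theorem \ref{vphi}. Let $V$ be any completion set of $W_G$ and fix $i\in[n]$; suppose $v_i,u_i\notin V$. By induction on the number of operations performed in forming $(W_G)_V$, the neighbourhood of $u_i$ never grows: its only initial neighbour is $v_i$, so the only vertex whose $(\cdot)_w$-operation could add edges at $u_i$ is $v_i$ itself, which is not in $V$. Hence in $(W_G)_V$ the component of $u_i$ equals $\{u_i,v_i\}$, contradicting the fact that $v_i$ has a neighbour in $\mathcal{N}_G(v_i)\neq\emptyset$. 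Thus $V\cap\{v_i,u_i\}\neq\emptyset$ for every $i$, so $|V|\geq n$, and the lower bound follows. The only delicate point (the expected main obstacle) is this neighbour-growth induction, which is essentially immediate because the whiskers are leaves; the rest of the argument is a direct application of Theorems \ref{vphi} and \ref{thmwhisker}.
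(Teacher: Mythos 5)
Your proposal is correct and takes essentially the same route as the paper: identify $\{v_{1},\ldots,v_{n}\}$ as a minimal completion set that every completion set of $W_{G}$ must meet in the forced way (so $\mathrm{min\mbox{-}comp}(W_{G})=n$), then apply Theorems \ref{vphi} and \ref{thmwhisker} with the whiskers $u_{i}$ as the required private neighbours. The paper asserts these combinatorial facts without proof, whereas you supply the leaf-neighbourhood argument justifying them (under the paper's standing assumption that $G$ is connected, which is needed for the claim $\mathcal{N}_{G}(v_{i})\neq\emptyset$).
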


\begin{proof}
Note that $\{1,\ldots,n\}$ is contained in every completion set of $W_{G}$. But, $\{1,\ldots,n\}$ itself is a minimal completion set of $W_{G}$. Thus, $\{1,\ldots,n\}$ is the only minimal completion set of $W_{G}$. Hence, we get the desired result by Theorem \ref{vphi} and Theorem \ref{thmwhisker}.
\end{proof}

\begin{figure}[H]
	\centering
\begin{tikzpicture}
  [scale=1,auto=left,every node/.style={circle,scale=0.6}]
 
  \node[draw] (n1) at (0,1)  {$1$};
  \node[draw] (n2) at (-0.9510,0.3090)  {$2$};
  \node[draw] (n3) at (-0.5877,-0.8090) {$3$};
  \node[draw] (n4) at (0.5877,-0.8090) {$4$};
  \node[draw] (n5) at (0.9510,0.3090) {$5$};
  
  \node[draw] (n6) at (2*0,2*1)  {$6$};
  \node[draw] (n7) at (-2*0.9510,2*0.3090)  {$7$};
  \node[draw] (n8) at (-2*0.5877,-2*0.8090) {$8$};
  \node[draw] (n9) at (2*0.5877,-2*0.8090) {$9$};
  \node[draw] (n10) at (2*0.9510,2*0.3090) {$10$};
 
  \foreach \from/\to in {n1/n2, n1/n3, n1/n4, n1/n5, n1/n6, n2/n3, n2/n5, n2/n7, n3/n4, n3/n5, n3/n8, n4/n5, n4/n9, n5/n10}
    \draw[] (\from) -- (\to);
    
\end{tikzpicture}
\caption{Graph $G$ with $\ell (G)<\mathrm{v}_{\emptyset}(J_{G}) < \mathrm{reg}(S/J_G)$.}\label{figwhisker}
\end{figure}

\begin{example}\label{exmwhisker}{\rm
Let $G$ be the graph given in \Cref{figwhisker} and $S = \mathbb{Q}[x_1, \dots, x_{10},y_1, \dots, y_{10}]$. Using Macaulay2, we get $\mathrm{reg}(S/J_G) =6$. Also, we see that $\ell (G) =$ length of the longest induced path in $G = 4$. By Corollary \ref{corwhisker}, we get $\mathrm{v}_{\emptyset}(J_G) =5$. Thus, we have $\ell (G) < \mathrm{v}_{\emptyset}(J_G) < \mathrm{reg}(S/J_G)$. This example shows that $\mathrm{v}_{\emptyset}(J_G)$ can be a better lower bound for regularity of binomial edge ideals than the lower bound given by Matsuda and Murai \cite{mm13}.
}
\end{example}

\begin{figure}[H]
	\centering
\begin{tikzpicture}
  [scale=1,auto=left,every node/.style={circle,scale=0.6}]
 
  \node[draw] (n1) at (0,1)  {$1$};
  \node[draw] (n2) at (1,0)  {$2$};
  \node[draw] (n3) at (0,-1) {$3$};
   \node[draw] (n4) at (-1,0) {$4$};
   \node[draw] (n5) at (0,2) {$5$};
  \node[draw] (n6) at (2,0) {$6$};
  \node[draw] (n7) at (0,-2) {$7$};
  \node[draw] (n8) at (-2,0) {$8$};

  \foreach \from/\to in {n1/n2,n1/n4,n1/n5, n2/n3, n2/n6, n3/n4, n3/n7, n4/n8, n5/n6, n5/n8, n6/n7, n7/n8}
    \draw[] (\from) -- (\to);
   \end{tikzpicture}
  
  \caption{Graph $G$ with $\mathrm{v}(J_{G})=\mathrm{v}_{\emptyset}(J_G) = \mathrm{reg}(S/J_G)$}\label{figvphi=reg}
\end{figure}

\begin{example}\label{v=reg}{\rm
Let $G$ be the graph shown in \Cref{figvphi=reg} and $S = \mathbb{Q}[x_1, \dots, x_8,y_1, \dots, y_8]$. Using \cite[Procedure A1]{grv21} and Macaulay2, we get $\mathrm{v}(J_{G})=4$ and $\mathrm{reg}(S/J_G) = 4$. On the other hand, $\{1,2,3,4\}$ is a completion set of $G$. Therefore, $\mathrm{v}_{\phi}(J_{G})=4$ by Theorem \ref{vphi}. In this example, we get $\mathrm{v}(J_{G})=\mathrm{v}_{\emptyset}(J_G) = \mathrm{reg}(S/J_G)$. Hence, our given bound in Theorem \ref{thmwhisker} is sharp.
}
\end{example}

\begin{theorem}\label{reg-v}
For every $n \in \mathbb{N}$, there exist a graph $G$ such that $\mathrm{reg}(S/J_G) - \mathrm{v}(J_G) = n$. Moreover, for every $n\in \mathbb{N}$, there exists a graph $G$ such that $\mathrm{reg}(S/J_G) - \mathrm{v}_{\emptyset}(J_G) = n$.

\end{theorem}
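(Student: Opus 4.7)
The plan is to exhibit an explicit family of graphs parametrized by $n$ and compute both invariants using the additivity results already established in the paper. The key observation is that $\mathrm{v}(J_G)$, $\mathrm{v}_{\emptyset}(J_G)$, and $\mathrm{reg}(S/J_G)$ are all additive over disjoint unions: the first by Corollary \ref{v-addbinom}, the second by the remark following Theorem \ref{vphi}, and the third because if $G = G_1 \sqcup G_2$ then $J_{G_1}$ and $J_{G_2}$ live in disjoint variable sets, so $S/J_G$ is a tensor product over $K$ and regularity adds by a standard K\"unneth argument. Therefore, if I can find a single connected graph $H$ for which $\mathrm{reg}(S/J_H) - \mathrm{v}(J_H) = 1$ and $\mathrm{reg}(S/J_H) - \mathrm{v}_{\emptyset}(J_H) = 1$, then the disjoint union of $n$ copies of $H$ will witness both claims simultaneously.

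My candidate is $H = P_3$, the path on three vertices $\{1,2,3\}$ with edges $\{1,2\}$ and $\{2,3\}$. Writing $v$ for the middle vertex, I observe that $P_3 = \mathrm{cone}(v, \overline{K_2})$, where $\overline{K_2}$ is the graph on $\{1,3\}$ with no edges, which is non-complete. Theorem \ref{thmcone} then yields $\mathrm{v}(J_{P_3}) = 1$. For the local v-number, the singleton $\{v\}$ is a completion set since $(P_3)_v = K_3$, and it is clearly minimal, so $\mathrm{min\mbox{-}comp}(P_3) = 1$; Theorem \ref{vphi} gives $\mathrm{v}_{\emptyset}(J_{P_3}) = 1$. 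Finally, $\mathrm{reg}(S/J_{P_3}) = 2$, either by direct Macaulay2 computation or by the known formula $\mathrm{reg}(S/J_{P_m}) = m - 1$ for paths.

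Setting $G_n = \underbrace{P_3 \sqcup \cdots \sqcup P_3}_{n \text{ copies}}$ and applying the three additivity principles gives $\mathrm{v}(J_{G_n}) = n$, $\mathrm{v}_{\emptyset}(J_{G_n}) = n$, and $\mathrm{reg}(S/J_{G_n}) = 2n$, yielding $\mathrm{reg}(S/J_{G_n}) - \mathrm{v}(J_{G_n}) = n$ and $\mathrm{reg}(S/J_{G_n}) - \mathrm{v}_{\emptyset}(J_{G_n}) = n$ in one stroke. There is no substantial obstacle here, since every ingredient is already in place: the only minor point worth stating carefully is the regularity additivity, which I would either quote from a standard reference on tensor products of graded $K$-algebras or derive from the short exact sequence / K\"unneth isomorphism in a single line. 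If the paper's convention includes $n = 0$, one can simply take $G_0$ to be any complete graph (for which both sides are readily seen to be equal), or equivalently interpret the empty disjoint union as the empty graph with zero-ideal.
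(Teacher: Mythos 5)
Your construction is correct for every $n\geq 1$ and takes a genuinely different route from the paper. The paper keeps the $\mathrm{v}$-number pinned at $1$ and lets the regularity grow: it sets $G=\mathrm{cone}(v,P_{n+2})$, so that $\mathrm{v}(J_G)=\mathrm{v}_{\emptyset}(J_G)=1$ by Theorem \ref{thmcone}, while $\mathrm{reg}(S/J_G)=\mathrm{reg}(S/J_{P_{n+2}})=n+1$ by the cone formula of Kiani and Saeedi Madani. You instead let both invariants grow at different rates via disjoint unions of $P_3$; the ingredients you invoke (Corollary \ref{v-addbinom}, the remark following Theorem \ref{vphi}, Theorem \ref{thmcone}, Theorem \ref{vphi}, and $\mathrm{reg}(S/J_{P_3})=2$) are all available in the paper, and the additivity of regularity over disjoint unions is standard, since minimal free resolutions tensor over $K$. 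Your witnesses are disconnected where the paper's are connected, but the statement does not require connectedness, so this costs nothing; what your approach buys is that it avoids the external regularity-of-cones result, at the price of needing the (easy but unproved-in-the-paper) additivity of regularity.

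The one genuine flaw is your treatment of $n=0$: it is \emph{not} true that $\mathrm{reg}(S/J_{K_m})=\mathrm{v}(J_{K_m})$ for an arbitrary complete graph. For $m\geq 2$ the ideal $J_{K_m}$ is the ideal of $2\times 2$ minors of a generic $2\times m$ matrix, which is prime (so $\mathrm{v}(J_{K_m})=0$) but whose Eagon--Northcott resolution gives $\mathrm{reg}(S/J_{K_m})=1$; the difference is $1$, not $0$. To handle $n=0$ you should instead take $K_1$ (where $J_G=(0)$ and both invariants vanish) or, as the paper does, a graph such as the one in Example \ref{v=reg}, for which $\mathrm{v}(J_G)=\mathrm{v}_{\emptyset}(J_G)=\mathrm{reg}(S/J_G)=4$.
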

\begin{proof}
For $n=0$, the result follows from Example \ref{v=reg}. Let $n\in \mathbb{N}^+$. Consider the graph $H = P_{n+2}$, a path graph on $n+2$ vertices. Then by \cite[Corollary 7.35]{hho}, we have $\mathrm{reg}(S/J_H) = n+1$. Now, let $G = \mathrm{cone}(v,H)$, where $v \notin V(H)$. Then $\mathrm{v}(J_G) =\mathrm{v}_{\emptyset}(G)= 1$ by Theorem \ref{thmcone}. Also, using \cite[Theorem 2.1]{ks18}, we get $\mathrm{reg}(S/J_G) = n+1$. Thus, we get $\mathrm{reg}(S/J_G) - \mathrm{v}(J_G) = (n + 1) - 1 = n$. In this case, $\mathrm{v}(J_{G})=\mathrm{v}_{\emptyset}(J_{G})$ and the further hypothesis follows.
\end{proof}
\medskip

\section{Some Open Problems on $\mathrm{v}(J_{G})$}\label{secprob}

In Section \ref{v-binomprop}, we discuss some properties of $\mathrm{v}$-number of binomial edge ideals and give a combinatorial bound. In \cite{v}, the authors managed to give a combinatorial description of $\mathrm{v}$-number for edge ideals of graphs. We ask the following question on the combinatorial aspects of the $\mathrm{v}$-number of binomial edge ideals.

\begin{question}{\rm
Let $G$ be a simple graph. Can we find some homogeneous polynomial $f$ just using the combinatorics of the graph $G$ such that $\mathrm{v}(J_{G})=\mathrm{deg}(f)$? Equivalently, does there exists any graph invariant of $G$ which is equal to $\mathrm{v}(J_{G})$?
}
\end{question}

In Theorem \ref{thmvin}, we prove that $\mathrm{v}(J_{G})\leq \mathrm{v}(\mathrm{in}_{<}(J_{G}))$ for some classes of binomial edge ideals and as an application, we get in Corollary \ref{corweakly} that $\mathrm{v}(J_{G})\leq \mathrm{v}(\mathrm{in}_{<}(J_{G}))$ hold for weakly closed graphs. Also, we see in Example \ref{exmvin} that $\mathrm{v}(\mathrm{in}_{<}(J_{G}))$ depends on the labelling of vertices and $\mathrm{v}(J_{G})$ can be strictly less than $\mathrm{v}(\mathrm{in}_{<}(J_{G}))$. With the virtue of these results and our computation, we put the following question.

\begin{question}{\rm
Is it true that $\mathrm{v}(J_{G})\leq \mathrm{v}(\mathrm{in}_{<}(J_{G}))$ for all graph $G$ with all possible labelling of $V(G)$? If not, then can we say that for a graph $G$, there exists a labelling of $V(G)$ for which $\mathrm{v}(J_{G})\leq \mathrm{v}(\mathrm{in}_{<}(J_{G}))$ hold?
}
\end{question}

In Section \ref{secvreg}, we try to relate the $\mathrm{v}$-number with (Castelnuovo-Mumford) regularity of binomial edge ideals. In Theorem \ref{vregchordal} and Theorem \ref{thmwhisker}, we show that $\mathrm{v}_{\emptyset}(J_{G})\leq \mathrm{reg}(S/J_{G})$ for some large classes of graphs including chordal and whisker graphs. Using \cite[Procedure A1]{grv21} and Macaulay2 \cite{mac2}, we investigate many graphs from several classes and witness that $\mathrm{v}_{\emptyset}(J_{G})\leq \mathrm{reg}(S/J_{G})$ hold for all of those graphs. Our strong intuition forces us to give the following conjecture.

\begin{conjecture}\label{conjvnum}
Let $G$ be a simple graph. Then $\mathrm{v}_{\emptyset}(J_{G})\leq \mathrm{reg}(S/J_{G})$. In particular, we have $\mathrm{v}(J_{G})\leq \mathrm{reg}(S/J_{G})$.
\end{conjecture}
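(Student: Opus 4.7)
The plan is to prove the stronger statement $\mathrm{min\mbox{-}comp}(G) \leq \mathrm{reg}(S/J_G)$ for every graph $G$; by Theorem \ref{vphi} this gives $\mathrm{v}_{\emptyset}(J_G) \leq \mathrm{reg}(S/J_G)$ and hence $\mathrm{v}(J_G) \leq \mathrm{reg}(S/J_G)$. The strategy is to extend the induced-matching technique of Theorem \ref{vregchordal} from chordal graphs to arbitrary graphs, making full use of Conca--Varbaro's equality $\mathrm{reg}(S/J_G) = \mathrm{reg}(S/\mathrm{in}_{<}(J_G))$ together with Proposition \ref{propim}. First I would reduce to the case where $G$ is connected, using Corollary \ref{v-addbinom} together with the additivity of regularity under disjoint union of supports.

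Next, fix a minimal completion set $V = \{v_1, \ldots, v_k\}$ of $G$ of size $k = \mathrm{min\mbox{-}comp}(G)$, reordered so that $v_i \in \mathcal{N}_G(v_1) \cup \cdots \cup \mathcal{N}_G(v_{i-1})$ for each $i \geq 2$. By Lemma \ref{lemnopath}, for each $i$ choose a witness $u_i \in \mathcal{N}_G(v_i)$ lying outside $\mathcal{N}_G(v_1) \cup \cdots \cup \mathcal{N}_G(v_{i-1})$ and disconnected from $v_1$ in $G[\{v_1, \ldots, \widehat{v_i}, \ldots, v_k, u_i\}]$. Then run the iterative relabelling procedure (Cases iA and iB) from the proof of Theorem \ref{vregchordal} to assign labels $t_1 < t_2 < \cdots < t_k$ to the $v_i$, with each $u_i$ labelled $t_i + 1$. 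This produces a candidate collection $M = \{e_i = \{x_{t_i}, y_{t_i+1}\} : 1 \leq i \leq k\}$ of edges in the hypergraph $\mathcal{H}(\mathrm{in}_{<}(J_G))$. The pairwise disjointness of the $e_i$, and the fact that no additional cardinality-two edge $\{x_{t_i}, y_{t_j+1}\}$ with $i \neq j$ belongs to $\mathcal{H}$, both follow from the choice of the $u_i$ exactly as in the chordal proof, giving $\sum_{e \in M}(|e|-1) = k$.

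The main obstacle---and the precise reason this remains a conjecture rather than a theorem---is ruling out higher-cardinality edges $u_\pi x_i y_j$ of $\mathrm{in}_{<}(J_G)$ whose support is contained in $\bigcup_i e_i$ but which do not belong to $M$. In the chordal argument, any such edge corresponds to an admissible path $\pi$ that alternates between the ``$t_\ell$'' layer and the ``$t_\ell+1$'' layer and necessarily produces an induced cycle of length $\geq 4$, which chordality forbids. For general $G$, such paths can genuinely arise through induced cycles. I see two plausible routes to handle this. The first is to enlarge the chosen matching edges: when an induced cycle $C$ in $G$ forces an unwanted admissible path, absorb additional $x$- or $y$-variables of vertices of $C$ into the relevant edge of $M$, so that $M$ becomes a hypergraph matching with edges of varying cardinalities whose total contribution $\sum_{e\in M}(|e|-1)$ still attains $k$. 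The second is to induct on the number of chord-free cycles of length $\geq 4$, adding chords one at a time up to a chordal completion $\widetilde{G} \supseteq G$, and controlling the effect on both $\mathrm{min\mbox{-}comp}$ and on the induced matching in $\mathcal{H}(\mathrm{in}_{<}(J_G))$. The first route is the more structurally natural, since an induced $n$-cycle already contributes $n-3$ to the regularity of $J_{C_n}$, giving room to enlarge matching edges while keeping the matching induced; however, coordinating these local enlargements globally for every choice of minimal completion set is the crux, and is precisely what Conjecture \ref{conjvnum} asks for.
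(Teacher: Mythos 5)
There is a genuine gap here, and in fact the statement you are trying to prove is stated in the paper only as Conjecture \ref{conjvnum}: the authors offer no proof, only the partial results of Theorems \ref{vregchordal} and \ref{thmwhisker} plus computational evidence. Your proposal correctly reproduces the parts of the argument for Theorem \ref{vregchordal} that do not use chordality (Lemma \ref{lemnopath} and hence the existence of the witnesses $u_i$, the relabelling procedure, the pairwise disjointness of the $e_i$, and the exclusion of extra cardinality-two edges $\{x_{t_i},y_{t_j+1}\}$), and you correctly locate the one place where chordality is essential: excluding higher-degree generators $u_{\pi}x_iy_j$ of $\mathrm{in}_{<}(J_G)$ whose support lies in $\mathcal{S}=\bigcup_i e_i$. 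In the paper's proof of the Claim, the contradiction in that step is literally ``which is a contradiction to the fact that $G$ is chordal,'' so nothing in the written argument survives for general $G$. Since you do not close this step, you have not proved the statement.

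Neither of your two proposed repairs is carried out, and each faces a concrete obstruction worth naming. For the first route, replacing an $e_i$ by a larger edge of $\mathcal{H}(\mathrm{in}_{<}(J_G))$ enlarges $\mathcal{S}$, which only makes it easier for further stray edges of $\mathcal{H}$ to sit inside $\mathcal{S}$; the induced-matching condition must be re-verified against \emph{all} admissible paths after each enlargement, and there is no argument that this terminates with $\sum_{e\in M}(\lvert e\rvert-1)\geq k$ while preserving disjointness. For the second route, passing to a chordal completion $\widetilde{G}\supseteq G$ gives no monotonicity you can use: adding edges can decrease $\mathrm{reg}(S/J_G)$ and also changes $\mathscr{C}(G)$, $P_{\emptyset}(G)$, and $\mathrm{min\mbox{-}comp}(G)$, so the inequality for $\widetilde{G}$ does not transfer back to $G$. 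Your write-up is a fair description of why the problem is open, but it should be presented as such rather than as a proof.
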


\bibliographystyle{amsalpha}

\end{document}